\documentclass[11pt]{article}

\usepackage{fourier} 
\usepackage{soul} 
\usepackage{enumerate} 

\usepackage{amsfonts,amssymb,amsmath,amsthm}
\usepackage{bbm}

\numberwithin{equation}{section}

\def\QQ{\mathbb{Q}}
\def\RR{\mathbb{R}}

\def\EE{\mathbb{E}}
\def\11{\mathbbm{1}}

\def\E{\mathbb{E}}
\def\P{\mathbb{P}}
\def\R{\mathbb{R}}
\def\Q{\mathbb{Q}}
\def\N{\mathbb{N}}

\def\d{\partial}
\def\Z{\mathbb{Z}}

\def\cE{{\cal E}}
\def\cF{{\cal F}}

\newtheorem{thm}{Theorem}[section]
\newtheorem{lem}[thm]{Lemma}
\newtheorem{cor}[thm]{Corollary}

\newtheorem{prop}[thm]{Proposition}

\theoremstyle{remark}
\newtheorem{rem}{Remark}

    \def\restriction#1#2{\mathchoice
                  {\setbox1\hbox{${\displaystyle #1}_{\scriptstyle #2}$}
                  \restrictionaux{#1}{#2}}
                  {\setbox1\hbox{${\textstyle #1}_{\scriptstyle #2}$}
                  \restrictionaux{#1}{#2}}
                  {\setbox1\hbox{${\scriptstyle #1}_{\scriptscriptstyle #2}$}
                  \restrictionaux{#1}{#2}}
                  {\setbox1\hbox{${\scriptscriptstyle #1}_{\scriptscriptstyle #2}$}
                  \restrictionaux{#1}{#2}}}
    \def\restrictionaux#1#2{{#1\,\smash{\vrule height .8\ht1 depth .85\dp1}}_{\,#2}}


\begin{document}

\title{Uniform convergence of penalized time-inhomogeneous Markov processes}

\author{Nicolas Champagnat$^{1,2,3}$, Denis Villemonais$^{1,2,3}$}

\footnotetext[1]{IECL, Universit\'e de Lorraine, Site de Nancy, B.P. 70239, F-54506 Vandœuvre-lès-Nancy Cedex, France}
\footnotetext[2]{CNRS, IECL, UMR 7502, Vand{\oe}uvre-l\`es-Nancy, F-54506, France}  
\footnotetext[3]{Inria, TOSCA team, Villers-l\`es-Nancy, F-54600, France.\\
  E-mail: Nicolas.Champagnat@inria.fr, Denis.Villemonais@univ-lorraine.fr}

\maketitle

\begin{abstract}
We provide an original and general sufficient criterion ensuring the exponential contraction of Feynman-Kac semi-groups of penalized processes. This criterion is applied to time-inhomogeneous one-dimensional diffusion processes conditioned not to hit $0$ and to penalized birth and death processes evolving in a quenched random environment.
\end{abstract}

\noindent\textit{Keywords:} Feynman-Kac formula; time-inhomogeneous Markov processes; penalized processes; one-dimensional
diffusions with absorption; birth and death processes in random environment with killing; asymptotic stability; uniform exponential
mixing; Dobrushin's ergodic coefficient.

\medskip\noindent\textit{2010 Mathematics Subject Classification.} Primary: {60B10; 60F99; 60J57; 37A25}. Secondary: {60J60; 60J27}.

\section{Introduction}
\label{sec:vrai-intro}

In~\cite{champagnat-villemonais-15}, we developed a probabilistic framework to study Markov processes with absorption conditionned on
non-absorption. The main result is a necessary and sufficient condition for the exponential convergence of conditional distributions
to a unique quasi-stationary distribution. Our approach is based on coupling estimates (Doeblin condition and Dobrushin coefficient)
which allow to use probabilistic methods to check the criteria in various classes of models, such as one-dimensional
diffusions~\cite{champagnat-villemonais-15b,champagnat-villemonais-15c}, multi-dimensional
diffusions~\cite{champagnat-coulibaly-villemonais-16} or multi-dimensional birth and death
processes~\cite{champagnat-villemonais-15d}.

Because our method is general and only makes use of semi-group properties and coupling criteria, its extension to the
time-inhomogeneous setting is natural. Actually, it appears that our method naturally extends to the even more general setting of the
contraction of Feynman-Kac semi-groups of penalized processes developed by Del Moral and Miclo~\cite{delmoral-miclo-02} and Del Moral
and Guionnet~\cite{DelMoral-Guionnet2000}. The literature on the topic is vast and closely related to the study of genealogical and
interacting particle systems. For more details, we refer the reader to the two textbooks~\cite{delmoral-04-book,delmoral-13-book} and
the numerous references therein.

The present paper can be seen as a complement to the results on the contraction of Feynman-Kac semi-groups gathered
in~\cite[Chap.\,12]{delmoral-13-book}. Our results apply both to the discrete-time and continuous-time cases. To show the novelty of
our criteria and how to apply the methods developed
in~\cite{champagnat-villemonais-15,champagnat-villemonais-15b,champagnat-villemonais-15c}, we provide a detailed study of two natural
classes of models that cannot be directly treated using previously known criteria: time-inhomogeneous diffusion processes with hard
obstacles in dimension 1 and time-inhomogeneous penalized one-dimensional birth and death processes. We also consider the case of
birth and death processes evolving in a quenched random environment, alternating phases of growth and decay, under very general
assumptions on the environment.

In Section~\ref{sec:intro}, we present the general class of models we consider and state our main result on the contraction of
Feynman-Kac semi-groups in the general framework of penalized time-inhomogeneous processes (Theorem~\ref{thm:main-result}). We then
obtain in Section~\ref{sec:other-results} new results on the limiting behavior of the expectation of the penalization
(Proposition~\ref{prop:eta}) with consequences on uniqueness on time-inhomogeneous stationary evolution problems with growth
conditions at infinity, and on the existence and asymptotic mixing of the Markov process penalized up to infinite time
(Theorem~\ref{thm:Q-proc}). We apply these results to time-inhomogeneous diffusions on $[0,+\infty)$ absorbed at $0$ and conditioned
on non-absorption (that is, with infinite penalization at 0) in Section~\ref{sec:exa_diff}. Section~\ref{sec:exa_PNM} is devoted to
the study of penalized continuous time inhomogeneous birth and death processes in $\N$: we first give a general criterion in
Subection~\ref{sec:PNM-general} and then study the case of birth and death processes in quenched environment alternating phases of
growth and decay (close to infinity) in Subection~\ref{sec:PNM-ex}. The proof of Theorem~\ref{thm:main-result} is given in
Section~\ref{sec:pf-main-thm}. Proposition~\ref{prop:eta} and Theorem~\ref{thm:Q-proc} are proved respectively in
Sections~\ref{sec:pf-prop} and~\ref{sec:pf-Q-proc}.

\section{Main result}
\label{sec:intro}

Let $\left(\Omega,(\mathcal{F}_{s,t})_{0\leq s\leq t\in I},\P,(X_t)_{t\in I}\right)$ be a Markov process evolving in a measurable
space $(E,\cE)$, where the time space is $I=[0,+\infty)$ or $I=\N$ and $X$ can be time-inhomogeneous, such that $X_t$ is
$\mathcal{F}_{s,r}$-measurable for all $s\leq t\leq r$. Let $Z=\{Z_{s,t};0\leq s\leq t,\ s,t\in I\}$ be a collection of
multiplicative nonnegative random variables such that, for any $s\leq t$, $Z_{s,t}$ is a $\mathcal{F}_{s,t}$-measurable random
variable and
\begin{align}
  \label{eq:hyp-first}
  \EE_{s,x}(Z_{s,t})>0\quad\text{and}\quad\sup_{y\in E}\,\EE_{s,y}(Z_{s,t})<\infty\quad\forall s\leq t\in I\ \forall x\in E.
\end{align}
By multiplicative, we mean that, for all $s\leq r\leq t\in I$,
\begin{align*}
  Z_{s,r}Z_{r,t}=Z_{s,t}.
\end{align*}

We define the non linear semi-group $\Phi=\{\Phi_{s,t};\ 0\leq s\leq t\}$ on the set $M_1(E)$ of all probability measures on $E$ by
setting, for any distribution $\mu\in M_1(E)$, $\Phi_{s,t}(\mu)$ as the probability measure on $E$ such that, for any bounded and $\cE$-measurable function $f:E\rightarrow\R$,
 \begin{align}
   \label{eq:def-phi}
 \Phi_{s,t}(\mu)(f):=\frac{\E_{s,\mu}(f(X_t)Z_{s,t})}{\E_{s,\mu}(Z_{s,t})},
 \end{align}
 where $((X_t)_{t\geq s},\P_{s,\mu})$ denotes the Markov process $X$ on $[s,+\infty)$ starting with initial distribution $\mu$ at time $s$.

Typical examples of penalizations are given by
\begin{align}
\label{eq:ex-penaliz}
Z_{s,t}=\11_{X_t\not\in D}\quad\text{or}\quad Z_{s,t}=e^{\int_s^t \kappa(u,X_u) du},
\end{align}
where $D\subset E$ is some absorbing set for the process $X$ or  $\kappa$ is a measurable function from $\RR_+\times E$ to $\RR$. In
the first case, $\Phi_{s,t}(\mu)$ is simply the conditional distribution of $X_t$ with distribution $\mu$ at time $s$, given it is
not absorbed in $D$ at time $t$. In the second case, if $\kappa(t,x)\leq 0$ for all $t\geq 0$ and $x\in E$, then $-\kappa(t,x)$ can
be interpreted as a killing rate at time $t$ in position $x$ and $\Phi_{s,t}(\mu)$ is the conditional distribution of $X_t$ with
distribution $\mu$ at time $s$, given it is not killed before time $t$. Note that if $\kappa$ is bounded from above by a finite
constant $\bar\kappa$, then we can replace $\kappa$ by $\kappa-\bar\kappa$ without modifying $\Phi_{s,t}(\mu)$ and hence recover the
previous interpretation of $\bar\kappa-\kappa$ as a killing rate.

For all $s\geq 1$ and all $x_1,x_2\in E$, we define the non-negative measure on $E$
\begin{align*}
\nu_{s,x_1,x_2}=\min_{i=1,2} \Phi_{s-1,s}(\delta_{x_i}),
\end{align*}
where the minimum between two measures is understood as usual as the largest measure smaller than both measures, and the real
constant
\begin{align*}
d_s=\inf_{t\geq 0,x_1,x_2\in E} \frac{\E_{s,\nu_{s,x_1,x_2}}(Z_{s,s+t})}{\sup_{x\in E} \E_{s,x}(Z_{s,s+t})}.
\end{align*}
Similarly, we define
\begin{align}
\label{eq:intro-nu-s}
\nu_{s}=\min_{x\in E} \Phi_{s-1,s}(\delta_{x})
\end{align}
and the real constant
\begin{align}
\label{eq:intro-d'-s}
 d'_s=\inf_{t\geq 0} \frac{\E_{s,\nu_{s}}(Z_{s,s+t})}{\sup_{x\in E} \E_{s,x}(Z_{s,s+t})}.
\end{align}
Note that $\nu_s\leq\nu_{s,x_1,x_2}$ and $d'_s\leq d_s$.

Let us define, for all $0 \leq s \leq
 t\leq T$ the linear operator $K_{s,t}^T$ on the set of bounded measurable function on $E$ by
\begin{align}
   \label{eq:def-K}
  K_{s,t}^T f(x)&= \frac{\E_{s,x}(f(X_t)Z_{s,T})}{\E_{s,x}(Z_{s,T})}.
\end{align}
We extend as usual this definition to any initial distribution $\mu$ on $E$ as
\begin{align*}
  \mu K_{s,t}^T f&= \int_E K_{s,t}^T f(x)\,\mu(dx).
\end{align*}
Note that $K^t_{s,t}f(x)=\Phi_{s,t}(\delta_x)(f)$ but $\mu K^t_{s,t}f\neq\Phi_{s,t}(\mu)(f)$ in general.

\begin{thm}
\label{thm:main-result}
For all probability measures $\mu_1,\mu_2$ on $E$ and for all $0\leq s\leq s+1 \leq t\leq T\in I$, we have
\begin{align}
\label{eq:main-1}
\left\|\mu_1 K_{s,t}^T-\mu_2 K_{s,t}^T\right\|_{TV}\leq  \prod_{k=0}^{\lfloor t-s \rfloor-1}\left(1- d_{t-k}\right) \|\mu_1-\mu_2\|_{TV}
\end{align}
and
\begin{align}
\label{eq:main-2}
\left\|\Phi_{s,t}(\mu_1)-\Phi_{s,t}(\mu_2)\right\|_{TV}\leq 2 \prod_{k=0}^{\lfloor t-s \rfloor-1}\left(1-d_{t-k}\right),
\end{align}
where $\|\cdot\|_{TV}$ denotes the usual total variation distance: for all signed finite measure $\mu$ on $E$,
\begin{align*}
  \|\mu\|_{TV}=\sup_{A\in\mathcal{E}}\mu(A)-\inf_{A\in\mathcal{E}}\mu(A).
\end{align*}
\end{thm}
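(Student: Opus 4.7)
The plan is to exhibit $K^T_{s,t}$ as a genuine time-inhomogeneous linear Markov kernel, bound its Dobrushin ergodic coefficient on each unit time step by $1-d_s$, iterate via the semigroup property, and then deduce the nonlinear bound~\eqref{eq:main-2} by writing $\Phi_{s,t}$ as $K^t_{s,t}$ precomposed with a reweighting of the initial distribution.

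First I verify the semigroup property $K^T_{s,u}K^T_{u,t} = K^T_{s,t}$ for $s \leq u \leq t \leq T$: starting from~\eqref{eq:def-K} and using the multiplicativity $Z_{s,T} = Z_{s,u} Z_{u,T}$ together with the Markov property at time $u$, the factor $K^T_{u,t}f(X_u) = \E_{u,X_u}(f(X_t) Z_{u,T})/\E_{u,X_u}(Z_{u,T})$ compensates the factor $\E_{u,X_u}(Z_{u,T})$ produced by integrating $Z_{u,T}$ conditionally on $X_u$, leaving $\E_{s,x}(f(X_t) Z_{s,T})$ in the numerator. In particular, $K^T_{s,t}\mathbf{1}=1$, so $K^T_{s,t}$ is a Markov kernel.

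The key Doeblin estimate is that, for all $x_1, x_2 \in E$ and $1 \leq s \leq T$,
\[
\bigl(K^T_{s-1,s}(x_1, \cdot) \wedge K^T_{s-1,s}(x_2, \cdot)\bigr)(E) \geq d_s.
\]
To prove it, I use the Markov property at time $s$ to rewrite $K^T_{s-1,s} f(x_i) = \mu_{x_i}(fh)/\mu_{x_i}(h)$, where $\mu_{x_i} := \Phi_{s-1,s}(\delta_{x_i})$ and $h(y) := \E_{s,y}(Z_{s,T})$. Since $\mu_{x_i} \geq \nu_{s,x_1,x_2}$ and $\mu_{x_i}(h) \leq \sup_{y \in E} h(y)$, one obtains the pointwise lower bound $K^T_{s-1,s}(x_i, A) \geq \int_A h \, d\nu_{s,x_1,x_2}/\sup_{y \in E} \E_{s,y}(Z_{s,T})$, independent of $i$. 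Taking $A=E$ and using the definition of $d_s$ with the particular choice $t := T-s$ in the infimum then gives the claim, so that the Dobrushin coefficient of $K^T_{s-1,s}$ is at most $1-d_s$.

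Setting $n := \lfloor t - s \rfloor$ and $u := t - n \in [s, s+1)$, the semigroup identity yields
\[
K^T_{s,t} = K^T_{s,u}\, K^T_{u,u+1}\, K^T_{u+1,u+2} \cdots K^T_{t-1,t},
\]
and submultiplicativity of the Dobrushin coefficient (using $c(K^T_{s,u}) \leq 1$ for the leading fractional step, and $c(K^T_{s'-1,s'}) \leq 1 - d_{s'}$ for $s' = t-n+1, \ldots, t$) gives~\eqref{eq:main-1} after reindexing via $k = t-s'$. To deduce~\eqref{eq:main-2}, I observe that $\Phi_{s,t}(\mu) = \eta_\mu\, K^t_{s,t}$ with $\eta_\mu(dx) := \E_{s,x}(Z_{s,t})\mu(dx)/\int \E_{s,y}(Z_{s,t})\mu(dy)$ a probability measure on $E$; applying~\eqref{eq:main-1} with $T = t$ and using $\|\eta_{\mu_1}-\eta_{\mu_2}\|_{TV} \leq 2$ produces the factor $2$. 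The central technical point is the Doeblin step: correctly aligning the numerator $\nu_{s,x_1,x_2}(h)$ and the denominator $\sup_y \E_{s,y}(Z_{s,T})$ with the defining infimum of $d_s$ \emph{uniformly in the terminal time $T$}, which is what makes the resulting bound depend only on the intermediate constants $d_s$ and not on $T$.
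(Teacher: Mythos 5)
Your proposal is correct and takes essentially the same route as the paper: the Doeblin minorization of $K^T_{s-1,s}(x_i,\cdot)$ via $\nu_{s,x_1,x_2}$ and the uniform-in-$T$ constant $d_s$, iteration of the Dobrushin contraction along unit steps, and then the passage to the nonlinear flow $\Phi_{s,t}$ by reweighting the initial measure. The paper phrases the nonlinear step as a direct integration over $\mu_1(dy)$ of the Dirac-vs-Dirac bound rather than via the identity $\Phi_{s,t}(\mu)=\eta_\mu K^t_{s,t}$, but these are the same computation.
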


In particular, if $\limsup_{t\rightarrow\infty} d_t>0$, there is convergence in~\eqref{eq:main-1} and~\eqref{eq:main-2} when
$t\rightarrow+\infty$, and if $\inf_{s\in I}d_s>0$, or more generaly if $\limsup_{t\rightarrow\infty}\frac{1}{t}\sum_{s\leq
  t}\log(1-d_s)<0$, we have geometric convergence in~\eqref{eq:main-1} and~\eqref{eq:main-2}. There is also convergence for example
if $d_t\geq ct^{-1}$ for $t$ large enough for some $c>0$.

\begin{rem}
  \label{rem:extension}
  Note that, in the definition of $\nu_{s,x_1,x_2}$ and $d_s$, the time increments of $+1$ are not restrictive, since we could change
  the time-scale in the definition of the time-inhomogeneous Markov process $X$ and the penalization $Z$ using any deterministic
  increasing function. In particular, given $s=s_0<t_0\leq s_1<t_1\leq\ldots\leq s_n<t_n\leq t$ in $I$, we may define for all
  $i=0,\ldots,n$ and all $x_1,x_2\in E$,
  \begin{align*}
    \nu_{s_i,t_i,x_1,x_2}=\min_{j=1,2} \Phi_{s_i,t_i}(\delta_{x_j}),
  \end{align*}
  and the real constant
  \begin{align*}
    d_{s_i,t_i}=\inf_{t\geq 0,x_1,x_2\in E} \frac{\E_{t_i,\nu_{s_i,t_i,x_1,x_2}}(Z_{t_i,t_i+t})}{\sup_{x\in E} \E_{t_i,x}(Z_{t_i,t_i+t})}.
  \end{align*}
  Then it is straightforward to extend the proof of Theorem~\ref{thm:main-result} (this can be obtained using an appropriate time
  change to recover $\nu_s$ and $d_s$) to prove that, for all probability measures $\mu_1,\mu_2$ on $E$ and all $T\geq t$, we have
  \begin{align*}
    \left\|\mu_1 K_{s,t}^T-\mu_2 K_{s,t}^T\right\|_{TV}\leq  \prod_{k=0}^{n}\left(1- d_{s_k,t_k}\right) \|\mu_1-\mu_2\|_{TV}
  \end{align*}
  and
  \begin{align*}
    \left\|\Phi_{s,t}(\mu_1)-\Phi_{s,t}(\mu_2)\right\|_{TV}\leq 2  \prod_{k=0}^{n}\left(1- d_{s_k,t_k}\right).
  \end{align*}
  This remark also applies to the next results (Proposition~\ref{prop:eta} and Theorem~\ref{thm:Q-proc}), where $\nu_s$ and $d'_s$
  can also be modified accordingly.
\end{rem}

Note also that our result is optimal in the time-homogeneous setting, in the sense that the exponential contraction
in~\eqref{eq:main-2} is equivalent to the property $d_0>0$ (see~\cite[Thm.\,2.1]{champagnat-villemonais-15}). We leave the extension
of this result to the general time-inhomogeneous case as an open question.

\section{Convergence of the expected penalization and penalized process up to infinite time}
\label{sec:other-results}

In the absorbed time-homogeneous setting of~\cite{champagnat-villemonais-15}, we also obtained complementary results on the limiting
behavior of $\E_x(Z_{s,t})$ when $t\rightarrow\infty$ (with $Z_{s,t}=\11_{X_t\not\in D}$ as in~\eqref{eq:ex-penaliz}) and on the
penalized process conditioned to never be extinct. Both statements can be extended to the present time-inhomogeneous penalized
framework, as stated in the following two results.

\begin{prop}
\label{prop:eta}
For all $y\in E$ and $s\in I$ such that $d'_s>0$, there exists a finite constant $C_{s,y}$ only depending on $s$ and $y$ such that,
for all $x\in E$ and $t,u\geq s+1$ with $t\leq u$,
\begin{align}
  \label{eq:prop-borne-eta}
  \left|\frac{\E_{s,x}(Z_{s,t})}{\E_{s,y}(Z_{s,t})}-\frac{\E_{s,x}(Z_{s,u})}{\E_{s,y}(Z_{s,u})}\right|\leq
  C_{s,y}\inf_{v\in[s+1, t]}\frac{1}{d'_v}\prod_{k=0}^{\lfloor v-s \rfloor-1}\left(1- d_{v-k}\right).
\end{align}
In particular, if
\begin{align}
\label{eq:hyp-prop-eta}
  \liminf_{t\in I,\ t\rightarrow+\infty}\frac{1}{d'_t}\prod_{k=0}^{\lfloor t-s \rfloor-1}\left(1- d_{t-k}\right)=0,
\end{align}
for all $s\geq 0$, there exists a positive bounded function $\eta_s:E\rightarrow (0,+\infty)$ such that
\begin{align}
\label{eq:limite-eta}
\lim_{t\rightarrow\infty} \frac{\E_{s,x}(Z_{s,t})}{\E_{s,y}(Z_{s,t})}=\frac{\eta_s(x)}{\eta_s(y)},\quad \forall x,y\in E,
\end{align}
where, for any fixed $y$, the convergence holds uniformly in $x$, and such that, for all $x\in E$ and $s\leq t\in I$,
\begin{align}
  \label{eq:fonction-propre}
  \E_{s,x}(Z_{s,t}\eta_t(X_t))=\eta_s(x).
\end{align}
In addition, the function $s\mapsto \|\eta_s\|_\infty$ is locally bounded on $[0,+\infty)$.
\end{prop}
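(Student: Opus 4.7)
The plan is to prove the quantitative bound~\eqref{eq:prop-borne-eta} by reducing to Theorem~\ref{thm:main-result} through a factorization of $\E_{s,x}(Z_{s,\tau})/\E_{s,y}(Z_{s,\tau})$ at an intermediate time $v\in[s+1,t]$, and then to build $\eta_s$ as the limit of the resulting Cauchy sequence under~\eqref{eq:hyp-prop-eta}. Writing $\pi_a^b(z):=\E_{a,z}(Z_{a,b})$ for $a\le b$, two elementary ingredients are used throughout: the multiplicativity of $Z$, which gives $\E_{s,x}(Z_{s,\tau})=\pi_s^v(x)\,\Phi_{s,v}(\delta_x)(\pi_v^\tau)$; and the semi-group identity $\Phi_{s,t}=\Phi_{r,t}\circ\Phi_{s,r}$ for $s\le r\le t$, which follows from multiplicativity and the Markov property by direct computation.

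For~\eqref{eq:prop-borne-eta}, fix $v\in[s+1,t]$ and $\tau\in\{t,u\}$; the factorization yields
\[
\frac{\E_{s,x}(Z_{s,\tau})}{\E_{s,y}(Z_{s,\tau})}=\frac{\pi_s^v(x)}{\pi_s^v(y)}\cdot\frac{\Phi_{s,v}(\delta_x)(\pi_v^\tau)}{\Phi_{s,v}(\delta_y)(\pi_v^\tau)}.
\]
After normalizing $\tilde\pi_v^\tau:=\pi_v^\tau/\sup_z\pi_v^\tau(z)\in[0,1]$ (well defined by~\eqref{eq:hyp-first}), Theorem~\ref{thm:main-result} applied with $\mu_1=\delta_x$, $\mu_2=\delta_y$ gives $|\Phi_{s,v}(\delta_x)(\tilde\pi_v^\tau)-\Phi_{s,v}(\delta_y)(\tilde\pi_v^\tau)|\le\prod_{k=0}^{\lfloor v-s\rfloor-1}(1-d_{v-k})$. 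The essential lower bound $\Phi_{s,v}(\delta_y)(\tilde\pi_v^\tau)\ge d'_v$ follows from $\Phi_{s,v}(\delta_y)\ge\nu_v$ --- a consequence of the semi-group identity, since $\Phi_{v-1,v}$ sends any probability measure to a convex mixture of the measures $\Phi_{v-1,v}(\delta_z)\ge\nu_v$ --- combined with the definition of $d'_v$. Hence the second factor differs from $1$ by at most $\prod/d'_v$ and varies over $\tau\in\{t,u\}$ by at most $2\prod/d'_v$. The prefactor $\pi_s^v(x)/\pi_s^v(y)$ is bounded uniformly in $v$ and $x$ by the same strategy applied at intermediate time $s+1$: writing $\pi_s^v=\E_{s,\cdot}(Z_{s,s+1}\pi_{s+1}^v(X_{s+1}))$, the estimates $\pi_s^v(x)\le\sup_z\E_{s,z}(Z_{s,s+1})\cdot\sup_w\pi_{s+1}^v(w)$ and $\pi_s^v(y)\ge\E_{s,y}(Z_{s,s+1})\,d'_{s+1}\,\sup_w\pi_{s+1}^v(w)$ (using $\Phi_{s,s+1}(\delta_y)\ge\nu_{s+1}$) combine into $\pi_s^v(x)/\pi_s^v(y)\le\sup_z\E_{s,z}(Z_{s,s+1})/[\E_{s,y}(Z_{s,s+1})\,d'_{s+1}]$. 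Taking the infimum over $v\in[s+1,t]$ yields~\eqref{eq:prop-borne-eta} with $C_{s,y}$ of this form, absorbing the factor $2$.

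Under~\eqref{eq:hyp-prop-eta}, the RHS of~\eqref{eq:prop-borne-eta} tends to $0$ (the infimum over $v\in[s+1,t]$ is non-increasing in $t$), so $t\mapsto\E_{s,x}(Z_{s,t})/\E_{s,y}(Z_{s,t})$ is Cauchy uniformly in $x$. To build a coherent family $(\eta_s)_{s\ge 0}$ satisfying~\eqref{eq:fonction-propre}, fix a reference $y_0\in E$ and set $\eta_s(x):=\lim_{t\to\infty}\E_{s,x}(Z_{s,t})/\E_{0,y_0}(Z_{0,t})$; existence and positivity follow by writing this ratio as $[\pi_s^t(x)/\pi_s^t(y_0)]/\E_{0,y_0}\bigl(Z_{0,s}\,\pi_s^t(X_s)/\pi_s^t(y_0)\bigr)$ (using multiplicativity on the denominator) and passing to the limit in both factors via dominated convergence, justified by the uniform-in-$x$ boundedness of $\pi_s^t(x)/\pi_s^t(y_0)$ already established. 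Property~\eqref{eq:limite-eta} is then immediate, and~\eqref{eq:fonction-propre} follows from
\[
\E_{s,x}(Z_{s,t}\eta_t(X_t))=\lim_u\frac{\E_{s,x}(Z_{s,t}\,\E_{t,X_t}(Z_{t,u}))}{\E_{0,y_0}(Z_{0,u})}=\lim_u\frac{\E_{s,x}(Z_{s,u})}{\E_{0,y_0}(Z_{0,u})}=\eta_s(x),
\]
the interchange of limit and expectation being justified by the uniform-in-$z$ convergence $\E_{t,z}(Z_{t,u})/\E_{0,y_0}(Z_{0,u})\to\eta_t(z)$ together with~\eqref{eq:hyp-first}. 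Local boundedness of $\|\eta_s\|_\infty$ follows from the dual inequality $\E_{0,y_0}(Z_{0,t})\ge\E_{0,y_0}(Z_{0,s})\,d'_s\,\sup_z\pi_s^t(z)$ (using $\Phi_{0,s}(\delta_{y_0})\ge\nu_s$ and the definition of $d'_s$), which gives $\|\eta_s\|_\infty\le 1/[\E_{0,y_0}(Z_{0,s})\,d'_s]$.

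The hardest step will be (i) the uniform (in $v$ and $x$) bound on $\pi_s^v(x)/\pi_s^v(y)$ by a constant depending only on $(s,y)$, essential for the infimum over $v$ rather than a bound depending on $x$ or $t$ to appear in~\eqref{eq:prop-borne-eta}; and (ii) the consistency of the definition of $\eta_s$ across different $s$, which forces the normalization by $\E_{0,y_0}(Z_{0,t})$ (independent of $s$) rather than by $\E_{s,y}(Z_{s,t})$, and requires a delicate dominated convergence argument to verify the eigenfunction relation.
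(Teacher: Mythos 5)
Your factorization strategy for~\eqref{eq:prop-borne-eta} is essentially the paper's: you split $\E_{s,x}(Z_{s,\tau})/\E_{s,y}(Z_{s,\tau})$ at an intermediate time $v$ into a prefactor $\pi_s^v(x)/\pi_s^v(y)$ times a ratio $\Phi_{s,v}(\delta_x)(\tilde\pi_v^\tau)/\Phi_{s,v}(\delta_y)(\tilde\pi_v^\tau)$, control the latter via Theorem~\ref{thm:main-result} together with the lower bound $\Phi_{s,v}(\delta_y)\geq\nu_v$, and then bound the prefactor by a constant depending only on $(s,y)$ — the same steps appear in the paper, merely with the normalization $\eta_{v,\tau}:=\pi_v^\tau/\E_{v,\nu_v}(Z_{v,\tau})$ in place of your $\tilde\pi_v^\tau:=\pi_v^\tau/\sup_z\pi_v^\tau(z)$. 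One genuine technical gap: you build $C_{s,y}$ from $d'_{s+1}$, which may be $0$ even when $d'_s>0$; the paper instead uses $t_1:=\min\{v\geq s+1: d'_v>0\}$ (which depends only on $s$ and satisfies $d'_{t_1}>0$ whenever the right-hand side of~\eqref{eq:prop-borne-eta} is finite). Your argument goes through verbatim after replacing $s+1$ by this $t_1$. Where you genuinely diverge is the normalization of $\eta_s$: the paper first takes the limit of $\E_{s,x}(Z_{s,t})/\E_{s,x_0}(Z_{s,t})$, shows that this defines $\eta_s$ only up to multiplicative constants $c_{s,t}$ obeying the cocycle relation $c_{s,t}c_{t,u}=c_{s,u}$, and then renormalizes by $c_{0,s}$; you instead normalize by the $s$-independent quantity $\E_{0,y_0}(Z_{0,t})$ from the outset, so that~\eqref{eq:fonction-propre} follows directly from the multiplicativity of $Z$ and a single interchange of limit and expectation (justified, as you note, by the uniform-in-$z$ convergence). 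This is a cleaner route that avoids the cocycle bookkeeping. Lastly, a caveat you should be aware of: your bound $\|\eta_s\|_\infty\leq 1/[\E_{0,y_0}(Z_{0,s})\,d'_s]$ presupposes $d'_s>0$, which~\eqref{eq:hyp-prop-eta} alone does not guarantee for every $s$; when $d'_s=0$ one should instead propagate a bound from some $t_1(s)$ with $d'_{t_1(s)}>0$ via~\eqref{eq:fonction-propre} and~\eqref{eq:hyp-first}. The paper's own proof leaves this final boundedness claim equally implicit, so this is not a point on which your argument falls short of the paper's.
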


Since $d'_t\leq d_t$, there is convergence to 0 in~\eqref{eq:prop-borne-eta} if $\limsup d'_t>0$, and the convergence is geometric if
$\inf_{t\geq 0}d'_t>0$. There is also convergence to 0 for example if $d'_t\geq c t^{-1}$ for $t$ large enough and for some
$c>1$.

The last theorem also implies uniqueness results on equation~\eqref{eq:fonction-propre} and on associated PDE problems.

\begin{cor}
  \label{cor:unicite-EDP}
  Assume that
  \begin{align*}
    \liminf_{t\in I,\ t\rightarrow+\infty}\frac{1}{d'_t}\prod_{k=0}^{\lfloor t-s \rfloor-1}\left(1- d_{t-k}\right)=0,
  \end{align*}
  Then the function $(s,x)\mapsto\eta_s(x)$ of the last proposition is the unique solution $(s,x)\mapsto f_s(x)$, up to a
  multiplicative constant, of
  \begin{align}
    \label{eq:valeur-propre-sg}
    \E_{s,x}(Z_{s,t}f_t(X_t))=f_s(x)
  \end{align}
  such that $f_s$ is bounded for all $s\geq 0$ and for some $x_0\in E$,
  \begin{align}
    \label{eq:condition}
    \|f_t\|_\infty=o\left(\frac{\prod_{k=0}^{\lfloor t\rfloor-1}\left(1-d_{t-k}\right)^{-1}}{\E_{0,x_0}(Z_{0,t})}\right)
  \end{align}
  when $t\rightarrow+\infty$. Moreover, this unique solution $f_s$ of~\eqref{eq:valeur-propre-sg} can be chosen positive.
\end{cor}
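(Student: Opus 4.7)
The plan is to establish uniqueness first, since positivity of $\eta$ is provided by Proposition~\ref{prop:eta}. The starting observation is that any solution $(f_s)$ of~\eqref{eq:valeur-propre-sg}, as well as $\eta$ by~\eqref{eq:fonction-propre}, satisfies $\Phi_{s,t}(\delta_x)(f_t) = f_s(x)/\E_{s,x}(Z_{s,t})$ for all $0 \le s \le t$ in $I$ and all $x\in E$. I would then apply the contraction estimate~\eqref{eq:main-2} to the Dirac measures $\delta_x$ and $\delta_{x'}$ to obtain
\[
\left|\frac{f_s(x)}{\E_{s,x}(Z_{s,t})} - \frac{f_s(x')}{\E_{s,x'}(Z_{s,t})}\right| \le \|f_t\|_\infty \prod_{k=0}^{\lfloor t-s\rfloor-1}(1-d_{t-k}),
\]
and multiply both sides by $\E_{s,x'}(Z_{s,t})$.

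The key step is to show that the resulting right-hand side $\|f_t\|_\infty \E_{s,x'}(Z_{s,t}) \prod_{k=0}^{\lfloor t-s\rfloor-1}(1-d_{t-k})$ tends to $0$ as $t \to \infty$. For this, using the Markov property I write $\E_{0,x_0}(Z_{0,t}) = \E_{0,x_0}(Z_{0,s}) \int \E_{s,y}(Z_{s,t})\, \Phi_{0,s}(\delta_{x_0})(dy)$; combined with the uniform convergence in~\eqref{eq:limite-eta} and dominated convergence, this gives $\E_{s,x'}(Z_{s,t})/\E_{0,x_0}(Z_{0,t}) \to \eta_s(x')/\eta_0(x_0) \in (0,\infty)$, so that this ratio stays bounded in $t$. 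Moreover, the product $\prod_{k=0}^{\lfloor t-s\rfloor-1}(1-d_{t-k})$ differs from $\prod_{k=0}^{\lfloor t\rfloor-1}(1-d_{t-k})$ only by a multiplicative constant depending on $s$ alone. Hence the growth assumption~\eqref{eq:condition} forces the right-hand side to zero.

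Passing to the limit on the left-hand side via~\eqref{eq:limite-eta} then gives $f_s(x)\eta_s(x') = f_s(x')\eta_s(x)$ for all $x, x' \in E$, so that $x \mapsto f_s(x)/\eta_s(x)$ equals some constant $c_s$. Substituting $f_s = c_s\eta_s$ and $f_t = c_t\eta_t$ into~\eqref{eq:valeur-propre-sg} and comparing with~\eqref{eq:fonction-propre} yields $c_s\eta_s(x) = c_t\eta_s(x)$, hence $c_s$ is independent of $s$. This gives $f = c\eta$ for some scalar $c$, and the positivity assertion then follows directly from Proposition~\ref{prop:eta}.

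The main obstacle in this plan is controlling the ratio $\E_{s,x'}(Z_{s,t})/\E_{0,x_0}(Z_{0,t})$ as $t\to\infty$: this requires combining the uniform convergence of~\eqref{eq:limite-eta} with a dominated convergence argument applied to the Markov decomposition above, and in particular one must verify that $\E_{s,y}(Z_{s,t})/\E_{s,x'}(Z_{s,t})$ stays bounded uniformly in $y$ and $t$, which again follows from~\eqref{eq:prop-borne-eta}. A secondary care point is keeping track of the small discrepancy between the two products over $k$, which amounts to a finite factor depending only on $s$.
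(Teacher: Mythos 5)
Your proof is correct and follows essentially the same strategy as the paper: rewrite $f_s(x)/\E_{s,x}(Z_{s,t})=\Phi_{s,t}(\delta_x)(f_t)$, apply the contraction estimate~\eqref{eq:main-2}, use the growth bound~\eqref{eq:condition} together with~\eqref{eq:limite-eta} to send the error to zero, and identify $f$ with a multiple of $\eta$. The only structural difference is that the paper compares $\Phi_{s,t}(\delta_x)$ with $\Phi_{s,t}(\nu_s)$ where $\nu_s$ is the measure obtained by pushing $\delta_{x_0}$ forward through the unnormalized semigroup up to time $s$ (so that $\E_{s,\nu_s}(Z_{s,t}f_t(X_t))=f_0(x_0)$ directly), which yields the full relation $f_s=\eta_s\,f_0(x_0)/\eta_0(x_0)$ in a single pass, whereas you compare two Dirac masses, deduce $f_s=c_s\eta_s$, and then need a separate (correct) two-line argument via~\eqref{eq:fonction-propre} to show $c_s$ is independent of $s$. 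Your observation that the boundedness of the ratio $\E_{s,x'}(Z_{s,t})/\E_{0,x_0}(Z_{0,t})$ must be controlled --- via the Markov decomposition, dominated convergence, and the uniform bound coming from~\eqref{eq:prop-borne-eta} --- is exactly the point the paper uses implicitly; making it explicit is a good addition. A minor cosmetic point: as the paper's total variation norm equals the $L^1$ norm of the difference, the bound $|\Phi_{s,t}(\delta_x)(f_t)-\Phi_{s,t}(\delta_{x'})(f_t)|\le 2\|f_t\|_\infty\prod_{k}(1-d_{t-k})$ carries a factor $2$, which you dropped; this is immaterial to the argument.
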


\begin{rem}
  \label{rem:uniqueness-EDP}
  The last result also gives uniqueness properties for stationary time-in\-ho\-mo\-ge\-neous evolution equations with growth
  conditions at infinity. Namely, let us assume that the semigroup $P_{s,t}f(x)=\E_{s,x}[Z_{s,t}f(X_t)]$ admits as time-inhomogeneous
  infinitesimal generator $(L_t,t\geq 0)$ (as defined e.g.\ in~\cite[Ch.\,5]{pazy-83}). We can also define the (time-homogeneous)
  semigroup on $[0,+\infty)\times E$ by $T_tf(s,x)=P_{s,s+t}f(s+t,x)$. Then~\eqref{eq:valeur-propre-sg} writes $T_t\eta=\eta$ and
  hence can be interpreted as some form of weak solution of the evolution equation
  \begin{align}
    \label{eq:PDE}
    \partial_t f_t(x)+L_t f_t(x)=0,\quad\forall (s,x)\in[0,+\infty)\times E,
  \end{align}
  for which Proposition~\ref{prop:eta} and Corollary~\ref{cor:unicite-EDP} give existence and uniqueness under
  condition~\eqref{eq:condition}.
\end{rem}

\begin{thm}
\label{thm:Q-proc}
Assume that
 \begin{align*}
  \liminf_{t\in I,\ t\rightarrow+\infty}\frac{1}{d'_t}\prod_{k=0}^{\lfloor t-s \rfloor-1}\left(1- d_{t-k}\right)=0.
\end{align*}
Then, for all $s\in I$, the family $(\QQ_{s,x})_{s\in I,x\in E}$ of probability measures on $\Omega$ defined by
$$
\QQ_{s,x}(A)=\lim_{T\rightarrow+\infty}\P_{s,x}(A\mid T<\tau_\d),\ \forall A\in{\cal F}_{s,u},\ \forall u\geq s,
$$
is well defined and given by
\begin{align*}
\restriction{\frac{d\Q_{s,x}}{d\P_{s,x}}}{{\cal F}_{s,u}}=\frac{Z_{s,u}\eta_u(X_u)}{\E_{s,x}[Z_{s,u}\eta_u(X_u)]},
\end{align*}
and the process $(\Omega,({\cal F}_{s,t})_{t\geq s},(X_t)_{t\geq
  0},(\QQ_{s,x})_{s,\in I,x\in E})$ is an $E$-valued time-inho\-mo\-ge\-neous Markov process.
In addition, this process is asymptotically mixing in the sense that, for any $s\leq t\in I$ and $x\in E$,
\begin{align}
\label{eq:thm-Q-proc-3}
\left\|\Q_{s,x}(X_t\in\cdot)-\Q_{s,y}(X_t\in\cdot)\right\|_{TV}\leq 2 \prod_{k=0}^{\lfloor t-s \rfloor -1}\left(1-d_{t-k}\right).
\end{align}
\end{thm}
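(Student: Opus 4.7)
The plan is to identify the candidate Radon--Nikodym density via Proposition~\ref{prop:eta}, verify consistency so that $\Q_{s,x}$ is a well-defined probability, and then obtain the Markov property and the mixing estimate by direct computation based on the eigenfunction identity~\eqref{eq:fonction-propre}. Concretely, I would fix $A\in\mathcal{F}_{s,u}$ with $s+1\leq u\leq T$ and a reference point $y_0\in E$, and rewrite
\begin{align*}
\P_{s,x}(A\mid T<\tau_\d)=\frac{\E_{s,x}(\11_A Z_{s,T})}{\E_{s,x}(Z_{s,T})}=\frac{\E_{s,x}\bigl(\11_A Z_{s,u}\,h_{u,T}(X_u)\bigr)}{\E_{s,x}\bigl(Z_{s,u}\,h_{u,T}(X_u)\bigr)},
\end{align*}
where $h_{u,T}(z):=\E_{u,z}(Z_{u,T})/\E_{u,y_0}(Z_{u,T})$; the two equalities use the Markov property of $X$ at time $u$, the multiplicativity of $Z$, and cancellation of the common normalization $\E_{u,y_0}(Z_{u,T})$. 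By Proposition~\ref{prop:eta}, $h_{u,T}(z)\to\eta_u(z)/\eta_u(y_0)$ uniformly in $z$ as $T\to\infty$ and $\eta_u$ is bounded, so dominated convergence combined with~\eqref{eq:fonction-propre} yields
\begin{align*}
\lim_{T\to\infty}\P_{s,x}(A\mid T<\tau_\d)=\frac{\E_{s,x}\bigl(\11_A Z_{s,u}\eta_u(X_u)\bigr)}{\E_{s,x}\bigl(Z_{s,u}\eta_u(X_u)\bigr)}=\frac{\E_{s,x}\bigl(\11_A Z_{s,u}\eta_u(X_u)\bigr)}{\eta_s(x)}.
\end{align*}
This is the announced density, and~\eqref{eq:fonction-propre} together with the Markov property shows that the value of the right-hand side is the same for every $u\geq s+1$ such that $A\in\mathcal{F}_{s,u}$; hence $(\Q_{s,x}|_{\mathcal{F}_{s,u}})_{u\geq s+1}$ is a consistent projective family and defines a probability measure $\Q_{s,x}$ on $\Omega$.

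The Markov property then follows by a direct computation: for $s\leq u\leq t$, $A\in\mathcal{F}_{s,u}$ and $f$ bounded measurable on $E$, the Markov property of $X$ under $\P$ gives
\begin{align*}
\E^{\Q}_{s,x}\bigl(\11_A f(X_t)\bigr)=\frac{1}{\eta_s(x)}\,\E_{s,x}\Bigl(\11_A Z_{s,u}\,\E_{u,X_u}\bigl[f(X_t)Z_{u,t}\eta_t(X_t)\bigr]\Bigr),
\end{align*}
and the inner expectation equals $\eta_u(X_u)\,\E^{\Q}_{u,X_u}(f(X_t))$ by definition of the density, so the right-hand side equals $\E^{\Q}_{s,x}\bigl(\11_A\,\E^{\Q}_{u,X_u}(f(X_t))\bigr)$, which is the Markov property.

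For the mixing bound~\eqref{eq:thm-Q-proc-3}, I would split $K_{s,t}^T$ at time $t$ and normalize by $\E_{t,y_0}(Z_{t,T})$ in numerator and denominator of~\eqref{eq:def-K}; the same argument as in the first step then yields, for every bounded measurable $f$ and every $x\in E$,
\begin{align*}
\lim_{T\to\infty}K_{s,t}^T f(x)=\frac{\E_{s,x}(f(X_t)Z_{s,t}\eta_t(X_t))}{\E_{s,x}(Z_{s,t}\eta_t(X_t))}=\E^{\Q}_{s,x}(f(X_t)),
\end{align*}
so in particular $\delta_x K_{s,t}^T(B)\to\Q_{s,x}(X_t\in B)$ for every measurable $B\subset E$. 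Applying~\eqref{eq:main-1} with $\mu_1=\delta_x$, $\mu_2=\delta_y$ and letting $T\to\infty$ then gives~\eqref{eq:thm-Q-proc-3}, since the right-hand side of~\eqref{eq:main-1} does not depend on $T$. The main delicate point throughout is the uniform-in-$x$ control of the ratios $h_{u,T}$ provided by Proposition~\ref{prop:eta}, which is precisely what allows the limits $T\to\infty$ to pass through the expectations defining $\Q_{s,x}$ and through the supremum defining the total variation distance.
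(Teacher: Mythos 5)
Your proof is correct, and it departs from the paper's argument in a meaningful way. The paper establishes well-definedness of $\Q_{s,x}$ by first showing that the conditional densities $\E_{s,x}(Z_{s,t}\mid\mathcal{F}_{s,u})/\E_{s,x}(Z_{s,t})$ converge almost surely to $M_{s,u}=Z_{s,u}\eta_u(X_u)/\E_{s,x}[Z_{s,u}\eta_u(X_u)]$, and then invokes the penalization theorem of Roynette, Vallois and Yor to deduce both that $(M_{s,u})_u$ is a martingale and that $Q_{s,x}^t(\Lambda)\to\E_{s,x}(M_{s,u}\11_\Lambda)$; the martingale property is what underlies consistency of the projective family. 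You instead exploit the fact that Proposition~\ref{prop:eta} gives \emph{uniform} (not merely pointwise) convergence of $h_{u,T}$, so that $\E_{s,x}(\11_A Z_{s,u}h_{u,T}(X_u))\to\E_{s,x}(\11_A Z_{s,u}\eta_u(X_u)/\eta_u(y_0))$ follows from the elementary bound $\|h_{u,T}-\eta_u/\eta_u(y_0)\|_\infty\E_{s,x}(Z_{s,u})\to 0$, and you obtain consistency directly from the eigenfunction identity~\eqref{eq:fonction-propre} and the Markov property. This is more self-contained (no external penalization theorem) and makes transparent exactly which feature of Proposition~\ref{prop:eta} is being used, at the small cost of verifying consistency by hand. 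Your argument for the Markov property is essentially the same direct computation as the paper's, and your derivation of~\eqref{eq:thm-Q-proc-3} fills in the passage from $\delta_x K_{s,t}^T\to\Q_{s,x}(X_t\in\cdot)$ to the total-variation estimate that the paper compresses into ``direct consequence of~\eqref{eq:main-1}'' (since the right-hand side of~\eqref{eq:main-1} is independent of $T$, one may take $T\to\infty$ eventwise and then the supremum defining the total variation). All steps are sound.
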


\begin{rem}
  In the case where $Z_{s,u}$ admits a regular conditional probability given $X_u$ for all $s\leq u$ (for example if $E$ is a Polish
  space), the transition kernel of $X$ under $(\Q_{s,x})_{s,x}$ is given by
  \begin{align*}
    \tilde{p}(s,x;u,dy)=\frac{\E_{s,x}(Z_{s,u}\mid X_u=y)\,\eta_u(y)}{\E_{s,x}(Z_{s,u}\eta_u(X_u))} p(s,x;u,dy),
  \end{align*}
  where $p$ is the transition kernel of the process $X$ under $(\P_{s,x})_{s,x}$.
\end{rem}

\section{One-dimensional diffusions with time-dependent coefficients}
\label{sec:exa_diff}

Our first example of application of the results of Section~\ref{sec:intro} deals with the case of a Markov process conditioned not to
hit some absorbing point $\d$, i.e.
\begin{align*}
Z_{s,t}=\11_{t<\tau_\d},
\end{align*}
where $\tau_\d$ is the hitting time of $\d$. This is the setting of~\cite{champagnat-villemonais-15}, but we study here the
time-inhomogeneous case.

More precisely, we consider a time inhomogeneous one-dimensional diffusion process $X$ on $[0,+\infty)$ stopped when it hits $0$ at time
$T^X_0=\inf\{t\geq 0,\ X_{t-}=0\}$ assumed almost surely finite and solution, for all $s\geq 0$, on $[s,T^X_0)$ to
\begin{align}
\label{eq:the-eds}
dX_t=\sigma(t,X_t)dB_t,\quad X_0\in(0,+\infty),
\end{align}
where $B$ is a standard one-dimensional Brownian motion and $\sigma$ is a measurable function on $[0,+\infty)\times(0,+\infty)$ to
$(0,+\infty)$. Note that our result could of course also apply to any time-inhomogeneous diffusions with drift that can be put in the
previous form by a time-dependent change of spatial scale. We assume that
\begin{align*}
\sigma_*(x)\leq \sigma(t,x)\leq \sigma^*(x),
\end{align*}
for some measurable functions $\sigma^*$ and $\sigma_*$ from $(0,+\infty)$ to $[0,+\infty]$ satisfying 
\begin{align*}
\int_{(0,+\infty)}\frac{x\,dx}{\sigma_*(x)^2}<\infty\quad\text{ and }\quad\int_{(a,b)}\frac{dx}{\sigma^*(x)^2}>0,\ \forall 0<a<b<\infty.
\end{align*}
Note that the former condition means that the time-homogeneous diffusion $dY_t=\sigma_*(Y_t)dB_t$ on $(0,\infty)$ stopped when it
hits $0$ at time $T^Y_0$ admits $+\infty$ as entrance boundary (i.e.\ $Y$ comes down from infinity, as defined in~\cite{CCLMMS09})
and  that $T^Y_0<\infty$ almost surely (see e.g.~\cite{freedman-83}).

We also assume that the time-homogeneous diffusion process $Y$ satisfies, for some constants $t_1>0$ and $A>0$,
\begin{align}
\label{eq:hyp-Y}
\P_y(t_1<T^Y_0)\leq A y,\ \forall y>0.
\end{align}
Up to a linear transformation of time (or, equivalently, multiplying $\sigma(t,x)$ by some postive constant), we can---and
will---assume without loss of generality that $t_1<1$. Explicit conditions on $\sigma_*$ ensuring the last assumption are given
in~\cite[Thms\,3.4\,\&\,3.7]{champagnat-villemonais-15b}. For instance, these conditions are fulfilled if $\sigma_*(x)\geq C
x\log^{\frac{1+\varepsilon}{2}}\frac{1}{x}$ for some constants $C>0$ and $\varepsilon>0$ in a neighborhood of $0$. Note that if
$\varepsilon=0$, the condition $\int_{0+}\frac{x\,dx}{\sigma_*(x)^2}<\infty$ might not be satisfied and hence it is not guaranteed
that the diffusion $Y$ hits $0$ in finite time.

\begin{thm}
\label{thm:diff}
Under the above assumptions, 
\begin{align*}
\inf_{s\geq 1} d'_s>0.
\end{align*}
In particular, we obtain exponential convergence in~\eqref{eq:main-1}, \eqref{eq:main-2}. Moreover, the assumptions of Proposition~\ref{prop:eta} and Theorem~\ref{thm:Q-proc} are satisfied.
\end{thm}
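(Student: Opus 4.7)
By Theorem~\ref{thm:main-result}, the main task is to show $\inf_{s\geq 1}d'_s>0$, which combined with $d'_s\leq d_s$ yields the geometric convergence in~\eqref{eq:main-1}--\eqref{eq:main-2} and trivially implies~\eqref{eq:hyp-prop-eta}, so that Proposition~\ref{prop:eta} and Theorem~\ref{thm:Q-proc} apply. Following the approach of~\cite{champagnat-villemonais-15b} for the time-homogeneous analogue, the proof reduces to two uniform (in $s\geq 1$) estimates: (a) a Doeblin-type minorization $\Phi_{s-1,s}(\delta_x)\geq c_1\,\mu$ for some probability measure $\mu$ supported on a compact $[a,b]\subset(0,+\infty)$ and some $c_1>0$, uniformly in $x>0$; and (b) a uniform survival comparison $\E_{s,\mu}(Z_{s,s+t})\geq c_2\sup_{x>0}\E_{s,x}(Z_{s,s+t})$ for some $c_2>0$ and all $t\geq 0$. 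Taking the infimum of measures in (a) gives $\nu_s\geq c_1\mu$, hence $d'_s\geq c_1 c_2$.

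\textbf{Step 1 (minorization).} I split $[s-1,s]$ into $[s-1,s-1/2]$ and $[s-1/2,s]$. For the first half, the speed-measure representation of 1D driftless diffusions provides the stochastic ordering $\tau^{Y^*}\leq_{\mathrm{st}}\tau^X\leq_{\mathrm{st}}\tau^Y$, where $Y,Y^*$ are the time-homogeneous SDEs with coefficients $\sigma_*,\sigma^*$. Combined with the coming-down-from-infinity of $Y$ (a consequence of the entrance condition $\int_{(0,+\infty)}\frac{x\,dx}{\sigma_*(x)^2}<\infty$) and with the assumption $\P_y(t_1<T_0^Y)\leq Ay$, a case analysis---large $x$ is handled via coming down, and small $x$ via the upper bound $\P_{s-1,x}(s<\tau^X)\leq\P_x(1<\tau^Y)\leq Ax$ on the denominator---shows that the conditional law of $X_{s-1/2}$ given $s-1/2<\tau^X$ puts mass at least $1/2$ on a fixed compact $[a_0,M]\subset(0,+\infty)$, uniformly in $s\geq 1$ and $x>0$. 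On the second half $[s-1/2,s]$, the uniform-in-time ellipticity of $\sigma$ on compacts of $(0,+\infty)$ provides an Aronson-type lower bound $p_\tau(s-1/2,y;s,z)\geq c$ for $y\in[a_0,M]$ and $z\in[a,b]$. Concatenating via the Markov property at time $s-1/2$ and dividing by $\P_{s-1,x}(s<\tau^X)\leq 1$ gives~(a).

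\textbf{Step 2 (survival comparison).} The synchronous coupling of two copies of~\eqref{eq:the-eds} through the same Brownian motion shows that $x\mapsto\E_{s,x}(Z_{s,s+t})$ is non-decreasing, hence $\sup_x\E_{s,x}(Z_{s,s+t})=\lim_{x\to+\infty}\E_{s,x}(Z_{s,s+t})$. Using the strong Markov property at the first hitting time $T_{[a,b]}$ of $[a,b]$ (finite a.s.\ by coming down) and the uniform integrability of $T_{[a,b]}-s$, the limiting quantity is expressed as an average of $\E_{T_{[a,b]},b}(Z_{T_{[a,b]},s+t})$ over a tight law of times. Comparing with $\E_{s,\mu}(Z_{s,s+t})$ via the Aronson-type density estimates from Step~1 and the minorization~(a) applied at time $s+\epsilon$ (for a small fixed $\epsilon>0$) then yields~(b) with a constant depending only on $\sigma_*,\sigma^*,A$ and $t_1$.

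\textbf{Main obstacle.} The delicate step is Step~2, and specifically making the comparison uniform in $t$. Naive truncations $\sup_x\E_{s,x}(Z_{s,s+t})\leq\E_{s+\epsilon,M}(Z_{s+\epsilon,s+t})+\delta_M$, with $\delta_M\to 0$ from coming down, are insufficient: for large $t$ the leading term decays while $\delta_M$ is a fixed constant, so the ratio with $\E_{s,\mu}(Z_{s,s+t})$ degenerates. The resolution is to replace the ``tail mass'' estimate by a ``tail survival'' estimate obtained from~(a) applied at time $s+\epsilon$: the contribution of $\{X_{s+\epsilon}>M\}$ to the survival integral is itself bounded by a multiple of $\E_{s+\epsilon,\mu}(Z_{s+\epsilon,s+t})$, which in turn is comparable to $\E_{s,\mu}(Z_{s,s+t})$ via the Markov property and the density lower bound. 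Uniformity in $s\geq 1$ throughout follows from the fact that all comparison constants depend only on the fixed quantities $\sigma_*,\sigma^*,A,t_1$.
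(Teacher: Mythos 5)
Your Step~1 is morally similar to the paper's Steps~1--3 (escape a neighborhood of $0$ via a martingale argument, then minorize), but the paper deliberately avoids any Aronson/Harnack input: instead of density lower bounds, it uses an elementary independent-coupling argument (two independent solutions of~\eqref{eq:the-eds} started at $x\geq\varepsilon$ and at $\varepsilon$ must meet before the upper one hits $0$, with uniformly positive probability). This matters because the coefficient $\sigma(t,x)$ is only assumed measurable in $t$, and the paper emphasizes that its result covers ``non-regular or degenerate coefficients''; lower transition-density estimates for such coefficients are not a given and would need a separate justification. A further consequence of the coupling route is that the paper's minorizing measure $\pi_s$ is allowed to depend on $s$ (only $\inf_s\pi_s([a,\infty))>0$ is used), whereas you assert a fixed compactly supported $\mu$, which is stronger than what the paper proves or needs.

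The genuine gap is in your survival comparison (your Step~2), and your own ``main obstacle'' paragraph does not escape it. To bound the contribution of $\{X_{s+\epsilon}>M\}$ by a multiple of $\E_{s+\epsilon,\mu}(Z_{s+\epsilon,s+t})$, you would need, for $y>M$ and all $t$, that $\P_{s+\epsilon,y}(t<T_0)\leq C\,\P_{s+\epsilon,\mu}(t<T_0)$. But that is precisely the uniform survival comparison you are trying to establish, merely shifted in time; the minorization~(a) controls conditional \emph{laws}, not \emph{unconditional} survival probabilities, and gives no such bound. The paper breaks this circularity with a renewal-type argument: a uniform exponential moment for the return time to $[0,b]$ (inequality~\eqref{eq:moment-expo}, obtained by comparison with the coming-down time-homogeneous diffusion $Y$), an irreducibility lower bound $\P_{s,a}(X_u\geq b)\geq ce^{-\rho(u-s)}$ (from~\eqref{eq:irreduct}), and the observation~\eqref{eq:last-de-chez-last} that $\P_{u,b}(t<T_0)\leq c^{-1}e^{\rho(u-s)}\P_{s,a}(t<T_0)$. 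Integrating the survival decomposition at the first entrance into $[0,b]$ against the exponentially integrable hitting-time law then yields $\sup_{x>0}\P_{s,x}(t<T_0)\leq C\,\P_{s,a}(t<T_0)$ uniformly in $s$ and $t$, which, combined with $\inf_{x\geq a}\P_{s,x}(t<T_0)=\P_{s,a}(t<T_0)$ and $\inf_s\nu_s([a,\infty))>0$, gives $\inf_{s\geq 1}d'_s>0$. This exchange of ``coming down'' into ``exponential return time plus exponentially small irreducibility'' is the key idea missing from your argument.
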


As far as we know, this is the first result of this kind on time-inhomogeneous diffusions allowing non-periodic or non-regular or degenerate coefficients. In particular, this extends significantly the results of~\cite{DelMoralVillemonais2015} in the one-dimensional case.

\begin{proof}[Proof of Theorem~\ref{thm:diff}]
The proof follows the same steps as in \cite[Section~5.1]{champagnat-villemonais-15b}, making use of the next lemma.
\begin{lem}
\label{lem:diff}
There exist constants $t_1\in]0,1[$ and $A>0$ such that,  for all $s\geq 0$ and $x>0$, 
\begin{align}
\label{eq:eq1lemdiff}
\P_{s,x}(s+t_1< T^X_0)\leq Ax\quad\text{and}\quad \inf_{s\geq 0}\, \P_{s,x}(s+t<T^X_0)>0,\ \forall t\geq 0.
\end{align}
Moreover, for all $t_2>0$,
\begin{align}
\label{eq:eq2lemdiff}
  \inf_{s\geq 0,x>0}\P_{s,x}(T^X_0 < s+t_2)>0,
\end{align}
for all $\rho>0$, there exists $b_\rho>0$ such that
  \begin{equation}
    \label{eq:moment-expo}
    \sup_{s\geq 0,x\geq b_\rho}\E_{s,x}(e^{\rho(T_{b_\rho}-s)})<+\infty,
  \end{equation}
 for all $a>0$ and $t\geq 0$,
  \begin{equation}
   \label{eq:proba-over-a}
   \inf_{s\geq 0} \,\P_{s,a}(t+s<T^X_{a/2})>0
  \end{equation}
  and for all $a,b>0$, there exists $t_{a,b}>0$ such that for all $t\geq t_{a,b}$,
  \begin{equation}
   \label{eq:irreduct}
   \inf_{s\geq 0}\P_{s,a}(X_{s+t}\geq b)>0.
  \end{equation}
\end{lem}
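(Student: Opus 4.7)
The plan is to couple $X$ with the time-homogeneous comparison diffusion $Y$ via a pathwise time change, and to reduce all five assertions to estimates on $Y$ or to direct Brownian estimates on the representation. By Dambis--Dubins--Schwarz, for each $s\geq 0$ and $x>0$ we may enlarge the probability space to carry a Brownian motion $W^*$ such that $X_{s+u}-x = W^*_{\theta(u)}$ on $[0,T^X_0-s]$, with $\theta(u)=\int_0^u \sigma^2(s+v,X_{s+v})\,dv$, and realize $Y$ on the same $W^*$ via $Y_u-x = W^*_{\theta_Y(u)}$. Parameterizing by the clock of $W^*$, the inverses $\beta=\theta^{-1}$ and $\beta_Y=\theta_Y^{-1}$ satisfy $\beta'(v)=1/\sigma^2(s+\beta(v),x+W^*_v)\leq 1/\sigma_*(x+W^*_v)^2=\beta_Y'(v)$, hence $\beta\leq \beta_Y$ pointwise on their common domain. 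Evaluating at $\tau^{W^*}_{\ell-x}:=\inf\{v:W^*_v=\ell-x\}$ yields the key pathwise ordering $T^X_\ell-s\leq T^Y_\ell$ for every $\ell\in[0,x)$, uniformly in $s$.

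The first inequality in \eqref{eq:eq1lemdiff} is the direct translation of \eqref{eq:hyp-Y} via this coupling. The second inequality of \eqref{eq:eq1lemdiff} and the estimate \eqref{eq:proba-over-a} are local Brownian statements: restricting to the event that $W^*$ stays in $[-\ell,\ell]$ on $[0,m]$ with $\ell=x/2$ or $a/2$ and $m$ large enough compared to $t$ times the (locally bounded) value of $\sigma^2$ on the resulting compact window, keeps $X$ away from $0$ and from the target level, and since $W^*$ is time-homogeneous this event has probability bounded below uniformly in $s$. Estimate \eqref{eq:eq2lemdiff} uses the coming-down-from-infinity of $Y$: the assumption $\int_{(0,\infty)} y\,dy/\sigma_*(y)^2<\infty$ together with the standard scale-and-speed formula yields $\sup_{y>0}\E_y[T^Y_0]<\infty$, so $\inf_{y>0}\P_y(T^Y_0<t_2)>0$ for $t_2$ large and, by iterating the Markov property, for every $t_2>0$; the coupling then transfers this to $X$. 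For \eqref{eq:moment-expo}, a Lyapunov argument for $Y$ based on the same speed-measure computation, in the spirit of \cite[Section~5.1]{champagnat-villemonais-15b}, furnishes $b_\rho$ and $\rho_0>0$ with $\sup_{y\geq b_\rho}\E_y[e^{\rho T^Y_{b_\rho}}]<\infty$ for $\rho\leq\rho_0$, and the coupling $T^X_{b_\rho}-s\leq T^Y_{b_\rho}$ transfers this bound to $X$.

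The main obstacle is \eqref{eq:irreduct}, which requires a lower bound on $\P_{s,a}(X_{s+t}\geq b)$ and therefore mixes an event on $W^*$ with a control of the random time change $\theta(t)$. The plan is to split time via the Markov property: first apply \eqref{eq:proba-over-a} on a short window $[s,s+\epsilon]$ to keep $X$ above $a/2$; then, on the driving Brownian motion restarted at time $s+\epsilon$, construct the event that it rises by at least $b-a/2$ and remains near that height on a clock interval long enough for the corresponding time change to reach it (the required clock length being controlled by a uniform local upper bound on $\sigma^2$ in a compact window containing $[a/2,b+1]$), which forces $X_{s+t}\geq b$; concatenating via the Markov property gives \eqref{eq:irreduct} uniformly in $s$ as soon as $t$ exceeds some explicit $t_{a,b}$.
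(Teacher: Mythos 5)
Your time-change coupling and the resulting one-sided comparison $T^X_\ell - s \leq T^Y_\ell$ reproduce the paper's construction, and they correctly give the first inequality of \eqref{eq:eq1lemdiff}, \eqref{eq:eq2lemdiff} and \eqref{eq:moment-expo}. The gap lies in the parts where the paper invokes the \emph{second} comparison diffusion $Z_t = W_{b^*(t)}$ built from $\sigma^*$, namely the second inequality of \eqref{eq:eq1lemdiff}, \eqref{eq:proba-over-a}, and the ``staying above $a$'' half of \eqref{eq:irreduct}. You replace that comparison by a direct Brownian argument predicated on ``the (locally bounded) value of $\sigma^2$'' on a compact spatial window, i.e.\ on $\sup_{t\ge 0,\, y\in K}\sigma(t,y)^2<\infty$. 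But the standing hypotheses allow $\sigma^*\colon(0,\infty)\to[0,+\infty]$ to take the value $+\infty$, and only require $\int_{(a,b)} dy/\sigma^*(y)^2>0$; there is no assumption that $\sigma$ (or $\sigma^*$) is locally bounded. Consequently the key step in your argument---choosing $m$ so large that the Brownian-time window $[0,m]$ translates into a physical-time window of length at least $t$---can fail: the relation $\beta(m)=\int_0^m dv/\sigma^2(s+\beta(v),x+W^*_v)$ gives no lower bound on $\beta(m)$ in terms of $m$ alone when $\sigma^2$ is unbounded on the window.

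The fix is exactly the paper's: introduce $Z_t=W_{b^*(t)}$ with $b^*=(a^*)^{-1}$, $a^*(s)=\int_0^s du/\sigma^*(W_u)^2$, and observe that the change-of-variable $a^*(b(t))=\int_0^t \sigma^2(v,X_v)/\sigma^*(X_v)^2\,dv\leq t$ yields the reverse pathwise ordering $T^Z_\ell\leq T^X_\ell-s$. One then bounds $\P_{s,x}(s+t<T^X_0)\geq\P_x(t<T^Z_0)$, $\P_{s,a}(s+t<T^X_{a/2})\geq\P_a(t<T^Z_{a/2})$, and the staying-high part of \eqref{eq:irreduct} by the corresponding probabilities for the time-homogeneous diffusion $Z$, whose non-degeneracy (and the positivity of these probabilities) is guaranteed precisely by $\int_{(a,b)} dy/\sigma^*(y)^2>0$ via the occupation-time formula, with no boundedness requirement. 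In short, the occupation-time positivity replaces the local $L^\infty$ bound your sketch implicitly assumes.
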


We admit for the moment this result and extend the main steps of~\cite[Section~5.1]{champagnat-villemonais-15b} to our new setting.

  \medskip
  \noindent
  \textit{Step 1: the conditioned process escapes a neighborhood of 0 in finite time.}\\
  The goal of this step is to prove that there exists $\varepsilon,c>0$ such that
  \begin{equation}
    \label{eq:step1}
    \P_{s,x}(X_{s+t_1}\geq\varepsilon\mid s+t_1<T^X_0)\geq c,\quad\forall s\geq 0,\,x>0.    
  \end{equation}

  To prove this, we first observe that, since $X$ is a local martingale and since $X_{(s+t_1)\wedge T^X_1}=0$ on the event $T_0^X\leq (s+t_1)\wedge T^X_1$, for all $x\in(0,1)$,
\begin{align*}
  x&=\E_{s,x}(X_{(s+t_1)\wedge T^X_1})=\E_{s,x}\left(X_{(s+t_1)\wedge T^X_1}\11_{(s+t_1)\wedge T^X_1<T_0^X}\right)\\
   &=\P_{s,x}(s+t_1<T^X_0)\E_{s,x}(X_{(s+t_1)\wedge T^X_1}\mid s+t_1<T^X_0)+\P_{s,x}(T^X_1<T^X_0\leq s+t_1).
\end{align*}
  By the Markov property,
  \begin{align*}
  \P_{s,x}(T^X_1<T^X_0\leq s+t_1)&\leq \EE_{s,x}\left[\mathbbm{1}_{T^X_1<T^X_0\wedge (s+t_1)}\P_{T^X_1,1}(T^X_0\leq s+t_1)\right]\\
  &\leq \P_{s,x}(T^X_1<T^X_0)  \sup_{u\in[s,s+t_1]}\P_{u,1}(T^X_0\leq s+t_1)\\
  &\leq \P_{s,x}(T^X_1<T^X_0)  \sup_{u\in[s,s+t_1]}\P_{u,1}(T^X_0\leq u+t_1)\\
  &=x \sup_{u\in[s,s+t_1]}\P_{u,1}(T^X_0\leq u+t_1).
  \end{align*}
The second part of Equation~\eqref{eq:eq1lemdiff} of  Lemma~\ref{lem:diff} entails that $\sup_{u\geq 0}\P_{u,1}(T^X_0\leq u+t_1)<1$ and therefore, using the first part of Equation~\eqref{eq:eq1lemdiff} of Lemma~\ref{lem:diff},
  $$
  \E_{s,x}\left(1-X_{(s+t_1)\wedge T^X_1}\mid s+t_1<T^X_0\right)\leq 1-\frac{1}{A'},
  $$
with $A'=A/(1-\sup_{u\geq 0}\P_{u,1}(T^X_0\leq u+t_1))$.
Markov's inequality then implies that, for all $x\in(0,1)$,
\begin{equation}
  \label{eq:calcul2.0}
  \P_{s,x}\left(\left. X_{(s+t_1)\wedge T^X_1}\leq \frac{1}{2A'-1}\,\right|\, s+t_1< T^X_0\right)\leq \frac{1-1/A'}{1-1/(2A'-1)}=1-\frac{1}{2A'}.    
\end{equation}
Set $\varepsilon:=1/(2(2A'-1))$ and assume, without loss of generality, that $A'$ is big enough so that $2\varepsilon\in(0,1)$. Applying the second part of~\eqref{eq:eq1lemdiff} to the diffusion $dZ_t=\sigma_*(t,Z_t+\varepsilon)$ (which satisfies
the above assumptions since $\int_0^\infty \frac{x\,dx}{\sigma_*(\varepsilon+x)^2}\leq\int_\varepsilon^\infty
\frac{x\,dx}{\sigma_*(x)^2}<\infty$), we have
\begin{align*}
  \inf_{t\geq 0}\P_{t,2\varepsilon}\left(t+t_1<T^X_\varepsilon\right)>0.
\end{align*}
Hence, for all $x\in(0,2\varepsilon)$,
\begin{align*}
  \P_{s,x}(X_{s+t_1}\geq\varepsilon) & \geq\P_{s,x}\left(T^X_{2\varepsilon}<s+t_1\right)\inf_{t\geq 0}\P_{t,2\varepsilon}\left(t+t_1<T^X_\varepsilon\right) \\
  & \geq\P_{s,x}\left(X_{(s+t_1)\wedge T^X_1}\geq 2\varepsilon\right) \inf_{t\geq 0}\P_{t,2\varepsilon}\left(t+t_1<T_\varepsilon\right) \\
  & \geq \frac{\P_{s,x}\left(s+t_1<T^X_0\right)}{2A'}\inf_{t\geq 0}\P_{t,2\varepsilon}\left(t+t_1<T_\varepsilon\right)
\end{align*}
by~\eqref{eq:calcul2.0}. This ends the proof of~\eqref{eq:step1} for $x<2\varepsilon$. For $x\geq 2\varepsilon$, standard
coupling arguments entail
$$
\P_{s,x}(X_{t_1}>\varepsilon\mid
t_1<\tau_\d)\geq\P_{s,x}(X_{t_1}>\varepsilon)\geq\P_{s,x}(t_1<T^X_\varepsilon)\geq\P_{s,2\varepsilon}(T^X_\varepsilon>t_1)>0.
$$
Hence~\eqref{eq:step1} is proved.

\medskip
  \noindent
  \textit{Step 2: Construction of coupling measures for the unconditioned process.}\\
Set $t_2=1-t_1>0$.  Our goal is to prove that there exists a constant $c_1>0$ such that, for all $s\geq 0$ and $x\geq\varepsilon$,
  \begin{equation}
    \label{eq:step2}
    \P_{s,x}(X_{s+t_2}\in \cdot)\geq c_1\pi_s(\cdot),
  \end{equation}
  where 
  $$
  \pi_s(\cdot)=\P_{s,\varepsilon}(X_{s+t_2}\in\cdot\mid s+t_2<T^X_0).
  $$
 Fix $s\geq 0$ and
  $x\geq\varepsilon$ and construct two independent diffusions $X^{s,\varepsilon}$ and $X^{s,x}$ solution to~\eqref{eq:the-eds} with initial values at time $s$
given by  $\varepsilon$ and $x$ respectively. Let $\theta=\inf\{t\geq s:X^{s,\varepsilon}_t=X^{s,x}_t\}$. By the strong Markov property, the process
  $$
  Y^{s,x}_t=
  \begin{cases}
    X^{s,x}_t & \text{if }t\in[s,\theta], \\
    X^{s,\varepsilon}_t & \text{if }t>\theta
  \end{cases}
  $$
  has the same law as $X^{s,x}$. Since $\theta\leq T^{s,x}_0:=\inf\{t\geq s: X^{s,x}_t=0\}$, for all $t>s$,
  $\P(\theta<t)\geq\P(T^{s,x}_0<t)$. Using Equation~\eqref{eq:eq2lemdiff} of Lemma~\ref{lem:diff}, we have
  $$
  c'_1:=\inf_{s\geq 0,y>0}\P_{s,y}(T^{s,x}_0 < s+t_2)>0.
  $$
  Hence
  $$
  \P_{s,x}(X_{s+t_2}\in\cdot)=\P(Y^{s,x}_{s+t_2}\in\cdot)\geq\P(X^{s,\varepsilon}_{s+t_2}\in\cdot,\ T^{s,x}_0<s+t_2)\geq c'_1\P_{s,\varepsilon}(X_{s+t_2}\in \cdot).
  $$
  Therefore, \eqref{eq:step2} is proved with $c_1=c'_1\inf_{s\geq 0}\P_{s,\varepsilon}(s+t_2<T^X_0)$, which is positive
  by~\eqref{eq:eq1lemdiff} of Lemma~\ref{lem:diff}.

  \medskip
  \noindent
  \textit{Step 3: Proof that $\nu_s\geq c_1 c \pi_{s-1+t_1}$.}\\
  Recall that $t_1+t_2=1$. Using successively the Markov property, Step 2 and Step 1, we have for all $s\geq 1$ and $x>0$
  \begin{align*}
    \P_{s-1,x}(X_{s-1+t_1+t_2}&\in\cdot\mid s-1+t_1+t_2<T^X_0)  \geq \P_{s-1,x}(X_{s}\in\cdot\mid s-1+t_1<T^X_0) \\ 
    & \geq \int_\varepsilon^\infty \P_{s-1+t_1,y}(X_{s}\in\cdot)\P_{s-1,x}(X_{s-1+t_1}\in dy\mid s-1+t_1<T^X_0) \\
    & \geq c_1\int_\varepsilon^\infty \pi_{s-1+t_1}(\cdot)\P_{s-1,x}(X_{s-1+t_1}\in dy\mid s-1+t_1<T^X_0) \\
    & =c_1\pi_{s-1+t_1}(\cdot)\P_{s-1,x}(X_{s-1+t_1}\geq \varepsilon\mid s-1+t_1<T^X_0) \geq c_1 c \pi_{s-1+t_1}(\cdot).
  \end{align*}
  This entails $\nu_s\geq c_1 c \pi_{s-1+t_1}$, where $\nu_s$ is defined in~\eqref{eq:intro-nu-s}.
  
  \medskip
  \noindent
  \textit{Step 4: Proof that $\inf_{s\geq 1} d'_s>0$.}\\  
We set $a=\varepsilon/2$. Using the definition of $\pi_s$ , we have
\begin{align*}
\pi_s([a,+\infty[)&\geq \P_{s,2a}(T_a^X\geq s+t_2\mid s+t_2< T_0^X)\\
                  &\geq \P_{s,2a}(T_a^X\geq s+t_2).
\end{align*}
Inequality~\eqref{eq:proba-over-a} allows us to conclude that $\inf_{s\geq 1}\nu_s([a,+\infty))>0$. 

We also deduce from~\eqref{eq:irreduct} that, setting $t_3=t_{a,a}$, there exists $\rho>0$ such that
$$
\inf_{s\geq 0}\P_{s,a}(X_{s+t_3}\geq a)\geq e^{-\rho t_3}.
$$
From~\eqref{eq:moment-expo}, one can choose $b> a$ large enough so that
\begin{align*}
  A:=\sup_{s\geq 0, x\geq b}\E_{s,x}\left(e^{\rho (T^X_{b}-s)}\right)<\infty.
\end{align*}
Then, defining $T^X_{[0,b]}$ as the first hitting time of $[0,b]$ by the process $X$ and by $\theta_t$ the shift operator of time
$t$, Markov's property entails
\begin{align}
  \label{eq:moment-expo-2}
  \sup_{s\geq 0, x\geq b}\E_{s,x}\left(e^{\rho (T^X_{[0,b]}\circ\theta_t-s-t)}\right)\leq A,
\end{align}
where, under $\P_{s,x}$, $T^X_{[0,b]}\circ\theta_t$ is the first hitting time of $[0,b]$ after time $s+t$ by the process $X$. Note
that, in particular, $T^X_{[0,b]}\circ\theta_t=s+t$ if $T^X_0\leq s+t$.

Then, setting $t_4=t_{a,b}$, for all $u\geq s+t_4$, defining $k$ as the unique interger such that $s+k t_3+t_4\leq u<s+(k+1)t_3+t_4$,
we have by Markov's property
\begin{align*}
  \P_{s,a}(X_u\geq b) & \geq\P_{s,a}(X_{s+t_3}\geq a,\ X_{s+2t_3}\geq a,\ldots,X_{s+kt_3}\geq a,\ X_u\geq b) \\
  & \geq e^{-\rho kt_3}\inf_{v\geq 0}\P_{v,a}(X_{v+u-s-kt_3}\geq b) \\
  & \geq ce^{-\rho(u-s)}
\end{align*}
where $c>0$ by~\eqref{eq:irreduct}. Therefore, for all $t\geq u\geq s+t_4$, making use of the monotonicity of $x\mapsto\P_{s,x}(t<T^X_0)$,
\begin{align}
  \label{eq:last-de-chez-last}
  ce^{-\rho(u-s)}\P_{u,b}(t<T^X_0)\leq \P_{s,a}(X_u\geq b) \P_{u,b}(t<T^X_0)\leq \P_{s,a}(t<T^X_0).
\end{align}

Then, for all $x\geq b$ and all $t\geq s+t_4$, using successively the strong Markov property, Equation~\eqref{eq:moment-expo-2} with $t=t_4$,~\eqref{eq:last-de-chez-last}
with $u=t$,~\eqref{eq:last-de-chez-last} with $u\geq s+t_4$, and~\eqref{eq:moment-expo-2} again,
\begin{align*}
\P_{s,x}(t<T^{X}_0)&\leq \P_{s,x}(t< T^X_{[0,b]}\circ\theta_{t_4})+\int_{s+t_4}^t
\sup_{y\in[0,b]}\P_{u,y}(t<T^X_0)\,\P_{s,x}(T^X_{[0,b]}\circ\theta_{t_4}\in du)\\
&\leq Ae^{-\rho (t-s-t_4)}+\int_{s+t_4}^t
\P_{u,b}(t<T^X_0)\,\P_{s,x}(T^X_{[0,b]}\circ\theta_{t_4}\in du)\\
&\leq c^{-1}Ae^{\rho t_4}\P_{s,a}(t<T^X_0)+c^{-1}\P_{s,a}(t<T^X_0)\int_{s+t_4}^t e^{\rho(u-s)}\,\P_x(T^X_{[0,b]}\circ\theta_{t_4}\in du)\\
&\leq 2c^{-1}Ae^{\rho t_4}\P_{s,a}(t<T^X_0).
\end{align*}
In the case where $t\in[s,s+t_4]$,
\begin{align*}
  \P_{s,x}(t<T^{X}_0)\leq 1\leq\frac{\P_{s,a}(s+t_4< T^X_0)}{\inf_{s\geq 0}\P_{s,a}(s+t_4< T^X_0)}\leq\frac{\P_{s,a}(t<
    T^X_0)}{\inf_{s\geq 0}\P_{s,a}(s+t_4< T^X_0)}.
\end{align*}
We deduce from inequality~\eqref{eq:eq1lemdiff} of Lemma~\ref{lem:diff} that there exists a constant $C>0$ such that, for all $s\geq 0$
and $t\geq s$,
\begin{align*}
  \sup_{x>0}\P_{s,x}(t<T^X_0)=\sup_{x\geq b}\P_{s,x}(t<T^X_0)\leq C\P_{s,a}(t<T^X_0).
\end{align*}
Since $\inf_{x\geq a}\P_{s,x}(t<T^X_0)=\P_{s,a}(t<T^X_0)$ and $\inf_{s\geq 1}\nu_s([a,+\infty))>0$, we obtain
\begin{align*}
\inf_{s\geq 1} d'_s>0.
\end{align*}
This concludes the proof of Theorem~\ref{thm:diff}.
\end{proof}

\begin{proof}[Proof of Lemma~\ref{lem:diff}]
We assume in the whole proof that $s=0$ and $X_0=x$. Since the statements of Lemma~\ref{lem:diff} are obtained from comparisons
with time-homo\-geneous diffusions, the result will follow from the study of the case $s=0$ only. For all $t\geq 0$, let
\begin{align*}
b(s)=\int_0^s \sigma^2(u,X_u)\,du.
\end{align*}
Note that $b$ is continuous and increasing. The equality $X_t=W_{b(t)}$ for all $t< T^X_0$ defines a Brownian motion $W$ started at
$W_0=x$ and stopped at its first hitting time of $0$ denoted by 
$T_0^W=b(T_0^X)$. 
This is a classical consequence of Levy's characterisation of the Brownian motion, see for
instance~\cite{revuz-yor-91a}. Note that, since a one dimensional Brownian motion hits $0$ in finite time almost surely, there exists $t\geq 0$ such that $W_{b(t)}=0$ and hence $b(T_0^X)<\infty$ almost surely.

 Let $Y$ be the time-homogeneous diffusion process stopped at $0$ defined as $Y_t=W_{b_*(t)}$, where
\begin{align*}
b_*(t)=\inf\{s\geq 0,\ a_*(s)\geq t\}, \quad\text{ with }a_*(s)=\int_0^s \frac{du}{\sigma_*(W_u)^2}.
\end{align*}
And similarly for $Z_t=W_{b^*(t)}$, replacing $\sigma_*$ by $\sigma^*$. In particular, $Y_{a_*(t)}=W_t$, $Z_{a^*(t)}=W_t$, and hence $T^Y_0=a_*(T^W_0)$ and $T^Z_0=a^*(T^W_0)$.
Note that $Y$ and $Z$ are solutions of the time-homogeneous SDEs
\begin{align*}
dY_t=\sigma_*(Y_t) dB^Y_t\quad\text{and}\quad dZ_t=\sigma^*(Z_t) dB^Z_t,\quad\text{with }Y_0=Z_0=x,
\end{align*}
for some Brownian motions $B^Y$ and $B^Z$ with $Y_0=Z_0=x$. The interest of this construction is that the processes $Y$ and $Z$ are
both obtained from a random time change of $X$: $Y$ is obtained by a slowing down of $X$, and $Z$ by a speeding up of $X$. In particular, it
is easy to check that
\begin{align*}
a_*(b(t))\geq t\quad\text{ and }\quad a^*(b(t))\leq t,\ \forall t\geq 0.
\end{align*}
Hence $T_0^Z\leq T_0^X\leq T_0^Y$ almost surely. Therefore, the first inequality of~\eqref{eq:eq1lemdiff} follows from the same
property for $Y$, as assumed in~\eqref{eq:hyp-Y}. Similarly, the second inequality in~\eqref{eq:eq1lemdiff} and~\eqref{eq:eq2lemdiff}
follow from the same property for $Z$ and $Y$, respectively, which are standard properties of time-homogeneous diffusion processes
(see for instance~\cite{freedman-83}).

Using the previous argument, we also deduce that, $T_a^Z\leq T_a^X\leq T_a^Y$ almost surely for all $a\leq x$.
Hence~\eqref{eq:moment-expo} follows from the same property for $Y$, which is classical because infinity is an entrance boundary for
$Y$ (see for instance~\cite{CCLMMS09,champagnat-villemonais-15b}). Inequality~\eqref{eq:proba-over-a} also follows from the same
comparison of hitting times and standard regularity properties of the time-homogeneous diffusion $Z$.

Finally, if $b\leq a/2$~\eqref{eq:irreduct} follows directly from~\eqref{eq:proba-over-a}, and if $b>a/2$, we use the comparison with
$Y$ and the fact that $\P_a(T^Y_{2b}<t_0)>0$ for some $t_0>0$ to see that $X$ hits $2b$ before time $t_0$ with probability under
$\P_{s,a}$ uniformly bounded from below with respect to $s\geq 0$. Next we use the comparison with $Z$ (as we did to prove the second
inequality in~\eqref{eq:eq1lemdiff}) to see that, under $\P_{s,2b}$, for any $t\geq 0$, there is a uniformly (with respect to $s$) positive
probability that $X$ does not hit $a<2b$ before time $t$. Combining these two facts entails~\eqref{eq:irreduct} with $t_{a,b}=t_0$.
\end{proof}

\section{Penalized time-inhomogeneous birth and death pro\-ces\-ses}
\label{sec:exa_PNM}

In the previous example, we considered the case of an inhomogeneous Markov process which is uniformly dominated by a time homogeneous
process coming down from infinity. This provided uniform mixing, controled by the Dobrushin coefficient, given by the mass of the
measure $\nu_s$. The goal of this section is to study a case of inhomogeneous Markov process in continuous time alternating periods
of uniform mixing (i.e.\ uniform coming down from infinity) and periods without uniform mixing.

This situation is for example natural for a birth and death process in random environment, where the environment alternates periods
favorable to growth and periods where the population has a tendency to descrease. The study of quasi-stationary behavior of such a
population can be formulated in two different ways: the study of convergence of the distribution of the population conditional on
non-extinction 1) when expectations are taken with respect to the law of the environment and of the birth and
death process (so-called \emph{annealed} quasi-stationary behavior), and 2) when expectations are taken only with respect to the
law of the birth and death process, for any fixed realization of the environment (so-called \emph{quenched} quasi-stationary
behavior). In the case of time-homogeneous Markov environment dynamics, the joint dynamics of environment and population is
time-homogeneous and hence enters the scope of our general results for Markov processes of~\cite{champagnat-villemonais-15}. The case
of quenched quasi-stationary behavior is more delicate since all realizations of the environment must be considered, even those which
are very unlikely when the population is conditioned on survival. In particular, this requires more stringent irreducibility
assumptions (see~\eqref{eq:assumption3} and~\eqref{eq:assumption4} below) than what one would expect in the annealed case.

We also detail in the examples studied in this section how inhomogeneous penalization can be handled, with some appropriate
boundedness assumptions. Our method can actually be adapted to several Markov processes with similar penalization. Typical situations
include the models studied in~\cite{champagnat-villemonais-15,champagnat-villemonais-15b,champagnat-villemonais-15c,
  champagnat-villemonais-15d}.

\subsection{General result}
\label{sec:PNM-general}

Let $(X_t)_{t\in\R_+}$ be a time inhomogeneous birth and death process reflected at $1$, with measurable birth rates $b_i(t)>0$ and
death rates $d_i(t)\geq 0$ at time $t\geq 0$ from state $i\geq 1$, such that $d_1(t)=0$ for all $t\geq 0$ and $d_i(t)>0$ for $i\geq
2$. We also consider the penalization defined by
\begin{align*}
Z_{s,t}=e^{\int_s^t \kappa(u,X_u) du},
\end{align*}
where $\kappa:\R_+\times \{1,2,\ldots\}\rightarrow \R$ is a bounded measurable function. Note that the study of the distribution of a
birth and death process $Y$ on $\Z_+$ absorbed at $0$ (with the same coefficients except $d_1(t)>0$) and conditioned not to hit $0$
(i.e. penalized by $\11_{Y_t\neq 0}$) enters this setting since
\begin{align*}
\E_{x,s}(f(Y_t)\mid Y_t\neq 0)=\frac{\E_{x,s}\left(f(X_t) e^{-\int_s^t d_1(u)\11_{X_u=1} du}\right)}{\E_{x,s}\left(e^{-\int_s^t d_1(u)\11_{X_u=1} du}\right)}.
\end{align*}
Similarly, the case of birth and death processes with catastrophe (i.e.\ with killing) occurring at bounded rate depending on the
position of the process (see~\cite[Section\,4.1]{champagnat-villemonais-15}) also enters this setting.

We will need irreducibility and stability assumptions: 
\begin{align}
  \label{eq:assumption3}
  \gamma_F:=\inf_{s\geq 0,\ x,y\in F}\P_{s,x}(X_{s+1}=y)>0,\quad\text{for all finite }F\subset\N
\end{align}
and
\begin{align}
  \label{eq:assumption4}
  \rho_x:=\inf_{s\geq 0,\ u\in[s,s+1]}\P_{s,x}(X_u=x)>0,\quad\forall x\in\N.
\end{align}
These two conditions are satisfied for example if, for each $n\in\N$, the functions $b_n(t)$ and $d_n(t)$ are uniformly bounded and
bounded away from 0.

\begin{thm}
  \label{thm:bd-beau-thm}
  Assume that~\eqref{eq:assumption3} and~\eqref{eq:assumption4} hold true and that, for some
  $\lambda>\|\kappa\|_\infty+\log(\gamma_{\{1\}}^{-1})$, there exists a finite $F\subset\N$ and an unbounded $\mathcal{T}\subset\R_+$
  such that
  \begin{align}
    \label{eq:expo-moment-X}
    A:=\sup_{t\in\mathcal{T}}\ \sup_{x\in\N}\ \E_{t,x}\left(e^{\lambda (T^X_F-t)}\right)<\infty,
  \end{align}
  where $T^X_F$ is the first hitting time of the set $F$ by $X$. We also assume that there exists $b\geq 2$ such that the set
  \begin{align}
    \label{eq:expo-moment-2}
    \mathcal{T}_b:=\Big\{s_1\in\mathcal{T},\,\exists s_2\in\mathcal{T}\text{ s.t.}\quad t_0+2\leq
      s_2-s_1\leq t_0+b\Big\}
  \end{align}
  is unbounded, where
  \begin{align}
    \label{eq:def-t_0}
    t_0=\left\lceil\frac{\log A}{\log(\gamma_{\{1\}}^{-1})}\right\rceil.
  \end{align}
  Then, there exist $\gamma>0$ such that, for all probability measures $\mu_1,\mu_2$ on $\N$ and for all $s\in\N$ and $t\geq s$,
  \begin{align*}
    \left\|\mu_1 K_{s,t}^T-\mu_2 K_{s,t}^T\right\|_{TV}\leq \exp\left(-\gamma\ N_{b,s,t}\right)\ \|\mu_1-\mu_2\|_{TV}
  \end{align*}
 and
  \begin{align*}
    \left\|\Phi_{s,t}(\mu_1)-\Phi_{s,t}(\mu_2)\right\|_{TV}\leq 2 \exp\left(-\gamma\ N_{b,s,t}\right),
  \end{align*}
  where $N_{b,s,t}:=\textnormal{Card}\left\{k\in\N\cap[s,t-t_0-2]:\mathcal{T}_b\cap[k,k+1)\neq\emptyset\right\}$.
  Moreover, the conclusions of Proposition~\ref{prop:eta} and Theorem~\ref{thm:Q-proc} are satisfied, except
  for~\eqref{eq:prop-borne-eta} and~\eqref{eq:thm-Q-proc-3}, which have to be replaced respectively by
  \begin{align*}
    \left|\frac{\E_{s,x}(Z_{s,t})}{\E_{s,y}(Z_{s,t})}-\frac{\E_{s,x}(Z_{s,u})}{\E_{s,y}(Z_{s,u})}\right|\leq
    C_{s,y} \exp\left(-\gamma\ N_{b,s,t}\right),\quad\forall x,y\in E,\ \forall s\leq t\leq u,
  \end{align*}
  for some constant $C_{s,y}$ only depending on $s$ and $y$, and
  \begin{align*}
    \left\|\Q_{s,x}(X_t\in\cdot)-\Q_{s,y}(X_t\in\cdot)\right\|_{TV}\leq 2 \exp\left(-\gamma\ N_{b,s,t}\right),\quad\forall x,y\in\N.
  \end{align*}
\end{thm}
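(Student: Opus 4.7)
The plan is to invoke the extension of Theorem~\ref{thm:main-result} given in Remark~\ref{rem:extension}, with time points adapted to the set $\mathcal{T}_b$. For each integer $k$ counted in $N_{b,s,t}$, I would pick $s_k\in\mathcal{T}_b\cap[k,k+1)$ and a corresponding $t_k\in\mathcal{T}$ satisfying $t_0+2\leq t_k-s_k\leq t_0+b$. The problem then reduces to showing that the constants $d_{s_k,t_k}$ of Remark~\ref{rem:extension} are bounded below by some $c>0$ uniformly in $k$; this produces a factor $(1-c)^{N_{b,s,t}}$ in the total variation bounds, and the stated rate follows with $\gamma=-\log(1-c)$.

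To lower bound $d_{s_k,t_k}$, I would establish two estimates. First, I would show that $\Phi_{s_k,t_k}(\delta_x)(\{1\})\geq c_1>0$ uniformly in $x\in\N$, which gives $\nu_{s_k,t_k,x_1,x_2}\geq c_1\delta_1$. To bound the numerator $\E_{s_k,x}(\11_{X_{t_k}=1}Z_{s_k,t_k})$ from below, I would restrict to the event that $X$ reaches $F$ before time $s_k+t_0$ (its probability is bounded below by $1-Ae^{-\lambda t_0}>0$ via Markov's inequality applied to~\eqref{eq:expo-moment-X}, using that $t_0=\lceil\log A/\log(\gamma_{\{1\}}^{-1})\rceil$ and $\lambda>\log(\gamma_{\{1\}}^{-1})$), then moves from $F$ to state $1$ in one time unit by~\eqref{eq:assumption3} applied to $F\cup\{1\}$, and finally is at $1$ at time $t_k$ (by iterating~\eqref{eq:assumption3} for $F=\{1\}$, this last event has probability at least $\gamma_{\{1\}}^b$). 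The $Z$-factor is bounded from below by $e^{-\|\kappa\|_\infty(t_0+b)}$, and the denominator is trivially bounded from above by $e^{\|\kappa\|_\infty(t_0+b)}$.

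Second, I would show that there exists $c_2>0$ such that
\[
\E_{t_k,1}(Z_{t_k,t_k+t})\geq c_2\sup_{x\in\N}\E_{t_k,x}(Z_{t_k,t_k+t})
\]
uniformly in $k$ and $t\geq 0$. Applying the strong Markov property at $T^X_F$ together with~\eqref{eq:expo-moment-X} at $t_k\in\mathcal{T}$, the contribution of paths that have not entered $F$ by time $t_k+u$ is at most $Ae^{-(\lambda-\|\kappa\|_\infty)u}$, which decays because $\lambda>\|\kappa\|_\infty$; the remaining contribution can be compared to $\E_{u,y}(Z_{u,t_k+t})$ for $y\in F$ and $u\geq t_k$, which is in turn compared to $\E_{t_k,1}(Z_{t_k,t_k+t})$ by inserting a single step via~\eqref{eq:assumption3} for $F\cup\{1\}$ and using~\eqref{eq:assumption4} along with the boundedness of $\kappa$.

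Combining these two steps gives $d_{s_k,t_k}\geq c_1c_2>0$, and the asserted convergence rates, as well as the analogues of Proposition~\ref{prop:eta} and Theorem~\ref{thm:Q-proc}, follow from the extended statements of these three results in Remark~\ref{rem:extension}. The main obstacle lies in the second step: uniformly controlling the ratio $\E_{t_k,1}/\sup_x\E_{t_k,x}$ over all $t\geq 0$ demands a delicate interplay between the tail decay of $T^X_F$ from~\eqref{eq:expo-moment-X} and the potential growth of $Z$ controlled by $\|\kappa\|_\infty$, and the precise inequality $\lambda>\|\kappa\|_\infty+\log(\gamma_{\{1\}}^{-1})$ is tailored to make this balance work through the Markov chain excursions.
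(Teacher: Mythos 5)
Your plan follows the paper's proof quite closely: reduce the problem via Remark~\ref{rem:extension} to lower--bounding coefficients $d_{s_k,t_k}$ over a sequence of windows attached to $\mathcal{T}_b$; show the corresponding coupling measure dominates a constant times $\delta_1$ using Markov's inequality on~\eqref{eq:expo-moment-X} and the irreducibility assumption; and compare $\sup_x\E_{t_k,x}(Z_{t_k,t_k+t})$ with $\E_{t_k,1}(Z_{t_k,t_k+t})$ by a strong Markov decomposition at $T^X_F$. Your choice of using $t_k\in\mathcal{T}$ itself as the right endpoint of the window (rather than $s_k+t_0+1$ as in the paper's Steps 2--4) is a harmless, arguably cleaner, variant, since then the exponential moment bound applies directly at that time.

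Two points, however, are genuine gaps in your sketch. First, in the strong Markov decomposition of $\E_{t_k,x}(Z_{t_k,t_k+t}\mathbbm{1}_{T^X_F\leq t_k+t})$, the factor $Z_{t_k,T^X_F}$ remains to be integrated against the law of $T^X_F$. Bounding it crudely by $e^{\|\kappa\|_\infty(T^X_F-t_k)}$, together with the $e^{\lambda(u-t_k)}$ factor from comparing $\E_{u,y}$ (for $y\in F$) to $\E_{t_k,1}$, produces $\E_{t_k,x}\bigl(e^{(\lambda+\|\kappa\|_\infty)(T^X_F-t_k)}\bigr)$, which is \emph{not} controlled by~\eqref{eq:expo-moment-X}. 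The paper avoids this by first replacing $\kappa$ by $\kappa-\|\kappa\|_\infty$, which changes neither $\Phi$, $K$, the $\nu_s$, nor the $d'_s$; then $Z\leq 1$, so $Z_{t_k,T^X_F}\leq 1$ and the exponential moment at level $\lambda$ alone suffices. You should perform (or justify avoiding) this reduction. Second, the windows $[s_k,t_k]$ you pick for consecutive integers $k$ all have length at least $t_0+2$ and so overlap; Remark~\ref{rem:extension} requires $s_0<t_0\leq s_1<t_1\leq\cdots$, i.e.\ disjoint windows. One must first extract a subsequence of $\mathcal{T}_b$-points spaced by at least $t_0+1$, obtain $(1-c)^{C_{b,s,t}}$ for the corresponding count $C_{b,s,t}$, and only then compare with $N_{b,s,t}$ (the paper uses $N_{b,s,t}\leq(t_0+1)C_{b,s,t}$ to conclude with $\gamma=\gamma_0/(t_0+1)$). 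As stated, the factor $(1-c)^{N_{b,s,t}}$ is not what the remark gives you.
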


Since $\mathcal{T}_b$ is unbounded, we obtain in particular convergence in total variation in Theorem~\ref{thm:bd-beau-thm}.
Moreover, the exponential speed of convergence is governed by the asymptotic density of the set $\mathcal{T}_b$. In Subsection~\ref{sec:PNM-ex}, we apply Theorem~\ref{thm:bd-beau-thm} to the case of a birth and death process evolving in a quenched random environment.

\begin{proof}
  We first notice that replacing $\kappa$ by $\kappa-\|\kappa\|_\infty$ does not change the operators $\Phi$ and $K$
  in~\eqref{eq:def-phi} and~\eqref{eq:def-K}, and hence the measures $\nu_s$ and the constants $d'_s$ are not modified. Therefore, we
  can assume without loss of generality that $\kappa$ is non-positive. As observed before Theorem~\ref{thm:main-result}, the
  penalized process can then be interpreted as a time-inhomogeneous birth and death process $Y$ with killing. More precisely, let $Y$
  be the time inhomogeneous birth and death process on $\Z_+$ with birth and death rates $b_n(t)$ and $d_n(t)$ at time $t$ from state
  $n\geq 1$, with additional jump rate $-\kappa(t,n)$ at time $t$ from $n\geq 1$ to $0$, which is assumed to be an absorbing point.
  Then
  $$
  \Phi_{s,t}(\mu)(f)=\EE_{\mu,s}(f(Y_t)\mid Y_t\neq 0).
  $$
  
  The process $Y$ can be constructed from the paths of $X$ with an additional killing rate, in which case $T_F\wedge T_0\leq T^X_F$,
  where $T_F$ is the first hitting time of the set $F$ by $Y$, and $T_0=T_{\{0\}}$. Therefore, assumption~\eqref{eq:expo-moment-X}
  implies that, for some constant $A<\infty$, for all $s\in\mathcal{T}$,
  \begin{align}
    \label{eq:expo-moment-Y}
    \sup_{x\in\N}\E_{s,x}\left(e^{\lambda (T_F\wedge T_0-s)}\right)\leq A.
  \end{align}

  \medskip\noindent{\it Step 1: Preliminary computations.}

  Let $s<s+1\leq t$ and $u\in[s+1,t]$. For all $x\in F$, by Markov's property,~\eqref{eq:assumption3} and~\eqref{eq:assumption4},
  \begin{multline*}
    \left(e^{-\|\kappa\|_\infty}\gamma_{\{1\}}\right)^{\lfloor u-s\rfloor-1}\,
    e^{-\|\kappa\|_\infty}\rho_1\,
    e^{-\|\kappa\|_\infty}\gamma_F\,\P_{u,x}(t<T_0) \\
    \begin{aligned}
      &  \leq\P_{s,1}\left(Y_s=Y_{s+1}=\cdots=Y_{s+\lfloor u-s\rfloor-1}=Y_{u-1}=1\right)\,
      \P_{u-1,1}(Y_u=x)\,\P_{u,x}(t<T_0) \\
      & \leq\P_{s,1}(t<T_0).
    \end{aligned}
  \end{multline*}
  Thus, for $C=e^{\|\kappa\|_\infty}\gamma_{\{1\}}/(\rho_1\gamma_F)$ and for all $u\in[s+1,t]$,
  \begin{align}
    \label{eq:calcul}
    e^{-\lambda(u-s)}\sup_{x\in F}\P_{u,x}(t<T_0)\leq C\P_{s,1}(t<T_0).
  \end{align}
  Now, for $u\in[s,s+1]$, by~\eqref{eq:assumption4},
  \begin{align*}
    e^{-\|\kappa\|_\infty}\rho_x\,\P_{u,x}(t<T_0)\leq \P_{s,x}(t<T_0),
  \end{align*}
  and hence, increasing $C$ if necessary, we obtain that for all $u\in[s,t]$,
  \begin{align}
    \label{eq:calcul2}
    e^{-\lambda(u-s)}\sup_{x\in F}\P_{u,x}(t<T_0)\leq C\sup_{x\in F}\P_{s,x}(t<T_0).
  \end{align}

  \medskip\noindent{\it Step 2: Dobrushin coefficient.}

  For this step and the next one, we fix $s_1\in\mathcal{T}_b$ and let $s_2\in\mathcal{T}$ such that $t_0+2\leq s_2-s_1\leq t_0+b$.
  Using~\eqref{eq:expo-moment-Y}, for all $t\geq s_1$ and $x\in\N$,
  \begin{align*}
    \P_{s_1,x}(T_F<t)&= \P_{s_1,x}(T_F<t\wedge T_0)\geq \P_{s_1,x}(t<T_0)-\P_{s_1,x}(t<T_F\wedge T_0)\\
    &\geq e^{-\|\kappa\|_\infty (t-s_1)}-Ae^{-\lambda (t-s_1)}.
  \end{align*}
  Hence, it follows from the definition of $t_0$ in~\eqref{eq:def-t_0} that there exists a constant $c_0>0$ such that
  $\P_{s_1,x}(T_F<t)\geq c_0>0$ for all $t\geq s_1+t_0$.

  By assumption~\eqref{eq:assumption3}, $\inf_{s\geq 0,\ y\in F}\P_{s,y}(Y_{s+1}=1)\geq\gamma_F>0$, thus the Markov property entails
  \begin{align*}
    \P_{s_1,x}(Y_{s_1+t_0+1}=1)
    \geq\mathbb{E}_{s_1,x}\left[\11_{T_F<s_1+t_0}\inf_{u\geq 0,\ y\in F}\mathbb{P}_{u,y}(Y_{u+1}=1)\rho_1^{\lceil t_0\rceil}
      e^{-\|\kappa\|_\infty t_0}\right]\geq c_1,
  \end{align*}
  where the constant $c_1$ does not depend on $s_1\in\mathcal{T}_b$ and $x\in\N$.
  Since for all $x\in\N$ and $f:\N\rightarrow\RR_+$,
  \begin{align*}
    \Phi_{s_1,s_1+t_0+1}(\delta_{x})(f)\geq \E_{s_1,x}[f(Y_{s_1+t_0+1})\11_{s_1+t_0+1<T_0}]\geq f(1)\P_{s_1,x}(Y_{s_1+t_0+1}=1),
  \end{align*}
  we deduce that
  \begin{align*}
    \nu_{s_1,s_1+t_0+1}:=\min_{x\in \N}\Phi_{s_1,s_1+t_0+1}(\delta_{x})\geq c_1\delta_1.
  \end{align*}

  \medskip\noindent{\it Step 3: Comparison of survival probabilities.}

  Given any $s\in\mathcal{T}$, using~\eqref{eq:expo-moment-Y}, Markov's property and inequality~\eqref{eq:calcul2} twice (first with
  $u=t$ and second for all $u\in[s,t]$), we have for all $t\geq s$ and $x\in\N$,
  \begin{align}
    \P_{s,x}(t<T_0) & \leq\P_{s,x}\left(t<T_F\wedge T_0\right)+\P_x\left(T_F\wedge T_0\leq t< T_0\right) \notag \\
    & \leq A e^{-\lambda(t-s)}+\int_s^t\sup_{y\in F\cup\{0\}}\P_{u,y}(t<T_0)\P_{s,x}(T_F\wedge T_0\in du) \notag \\
    & \leq AC\,\sup_{y\in F}\P_{s,y}(t<T_0)+C\,\sup_{y\in F}\P_{s,y}(t<T_0)\int_s^t e^{\lambda (u-s)}\,\P_{s,x}(T_F\wedge T_0\in du)
    \notag \\
    & \leq 2AC\,\sup_{y\in F}\P_{s,y}(t<T_0). \label{eq:calcul3}
  \end{align}

  Recall that we fixed $s_1\in\mathcal{T}_b$ and $s_2\in\mathcal{T}$ such that $t_0+2\leq s_2-s_1\leq t_0+b$. For all $x\in\N$, if $t\geq
  s_2$,~\eqref{eq:calcul3} and~\eqref{eq:calcul} entail
  \begin{align*}
    \P_{s_1+t_0+1,x}(t<T_0) & =\sum_{y\in\N}\P_{s_1+t_0+1,x}(Y_{s_2}=y)\P_{s_2,y}(t<T_0) \\
    & \leq 2AC\sum_{y\in\N}\P_{s_1+t_0+1,x}(Y_{s_2}=y)\sup_{z\in F}\P_{s_2,z}(t<T_0) \\
    & \leq 2AC \sup_{z\in F}\P_{s_2,z}(t<T_0) \\
    & \leq 2AC^2\,e^{\lambda(s_2-(s_1+t_0+1))}\,\P_{s_1+t_0+1,1}(t<T_0) \\
    & \leq 2AC^2\,e^{\lambda(b-1)}\,\P_{s_1+t_0+1,1}(t<T_0).
  \end{align*}
  Since we assumed that the catastrophe rate $-\kappa$ is uniformly bounded, the last inequality extends to any $t\in[s_1+t_0+1,s_2]$
  (increasing the constant if necessary).

  \medskip\noindent{\it Step 4: Conclusion}

  Combining Steps 2 and 3, there exists $c'>0$ such that, for all $s_1\in\mathcal{T}_b$,
  \begin{align*}
    d'_{s_1,s_1+t_0+1} & :=\inf_{t\geq s_1+t_0+1} \frac{\P_{s_1+t_0+1,\nu_{s_1,s_1+t_0+1}}(t<T_0)}{\sup_{x\in \N}
      \P_{s_1+t_0+1,x}(t<T_0)} \\ & \geq c_1 \inf_{t\geq s_1+t_0+1}
    \frac{\P_{s_1+t_0+1,1}(t<T_0)}{\sup_{x\in \N} \P_{s_1+t_0+1,x}(t<T_0)}\geq c'.
  \end{align*}
  Theorem~\ref{thm:main-result} and Remark~\ref{rem:extension} then imply that there exists $\gamma_0>0$ such that
  \begin{align*}
    \left\|\mu_1 K_{s,t}^T-\mu_2 K_{s,t}^T\right\|_{TV}\leq \exp\left(-\gamma_0\ C_{b,s,t}\right)\ \|\mu_1-\mu_2\|_{TV}
  \end{align*}
  and
  \begin{align*}
    \left\|\Phi_{s,t}(\mu_1)-\Phi_{s,t}(\mu_2)\right\|_{TV}\leq 2 \exp\left(-\gamma_0\ C_{b,s,t}\right),
  \end{align*}
  where
  \begin{multline*}
    C_{b,s,t}:=\sup\Big\{k\geq 1:\,\exists s\leq t_1<t_2<\ldots<t_k\leq t-t_0-1,\,t_i\in\mathcal{T}_b,\,\forall i=1,2,\ldots,k, \\
    t_{i+1}-t_{i}\geq t_0+1,\,\forall i=1,2,\ldots,k-1\Big\}.
  \end{multline*}
  Since $N_{b,s,t}\leq (t_0+1) C_{b,s,t}$, this concludes the proof of~\ref{thm:bd-beau-thm} with $\gamma=\gamma_0/(t_0+1)$.
\end{proof}

\subsection{An example with alternating favorable and unfavorable periods in a quenched random environment}
\label{sec:PNM-ex}

To illustrate how the assumptions of Theorem~\ref{thm:bd-beau-thm} can be checked in practice, we consider the case of alternating
phases of favorable and unfavorable birth and death rates. By favorable, we mean a process which comes down fast from infinity (see
Assumption~\eqref{eq:assumption1} below), a criterion which is known to be  related to uniform convergence to quasi-stationary
distributions for time-homogeneous birth and death processes~\cite{Martinez-Martin-Villemonais2012,champagnat-villemonais-15}. We
study the problem of \emph{quenched} stationary behavior of the birth and death process: we assume that the time length of the
favorable and unfavorable periods are the realizations of a random environment and we study properties that hold almost surely with
respect to the environment.

More precisely, we consider two sequences $(u_j,j\geq 0)$ and $(v_j,j\geq 0)$ of positive real numbers and a family of sequence of
pairs of nonnegative real numbers $\{(b_n^j,d_n^j)_{n\geq 1},j\geq 0\}$ such that, for all $j\geq 0$, $d_1^j=0$, $b_n^j>0$ for all
$n\geq 1$ and $d_n^j>0$ for all $n\geq 2$. The sequence $(u_j,j\geq 0)$ (resp.\ $(v_j,j\geq 0)$) represents the lengths of successive
unfavorable (resp.\ favorable) time intervals. Without loss of generality, we assume that the first phase is unfavorable. Therefore,
if we set $s_0=0$
\begin{align*}
  \sigma_j=s_j+u_j\quad\text{and}\quad s_{j+1}=\sigma_j+v_j,\quad\forall j\geq 0,
\end{align*}
then the unfavorable time intervals are $[s_j,\sigma_j)$, $j\geq 0$ and the favorable time intervals are $[\sigma_j,s_{j+1})$, $j\geq 0$.
During each favorable time interval, we assume that the birth and death rates satisfy
\begin{align*}
  b_n(t)\leq b^j_n\quad\text{and}\quad d_n(t)\geq d^j_n,\quad\forall t\in[\sigma_j,s_{j+1}).
\end{align*}

The fact that the process comes down from infinity during favorable time intervals is expressed in the following condition, assumed
throughout this section:
\begin{align}
\label{eq:assumption1}
\sup_{j\geq 0} S_n^j\xrightarrow[n\rightarrow +\infty]{} 0,
\end{align}
where
\begin{align}
\label{eq:def-S}
S_n^j:=\sum_{m\geq n}\frac{1}{d_m^j\alpha^j_m}\sum_{\ell\geq m} \alpha^j_\ell <\infty,
\end{align}
with $ \alpha^j_\ell=\left(\prod_{i=1}^{\ell-1} b^j_i\right)/\left(\prod_{i=1}^{\ell} d_i^j\right). $ For example, easy computations
allow to check that~\eqref{eq:assumption1} is true if, for all $j\geq 0$, $d^j_n\geq a_1(n-1)^{1+\delta}$ and $b_n^j\leq a_2 n$ for
some $a_1,\delta>0$ and $a_2<\infty$.

We recall that, if $S_1^j$ is finite for some $j$, then the time-homogeneous birth and death process $Y^j$ with birth rates $b_i^j$ and death rates $d_i^j$ from state $i$, comes down from infinity (see for instance \cite{vanDoorn1991}). In addition, the distribution of $Y^j$ starting from $\infty$ can be defined and, for all $n\geq 1$,
\begin{align}
\label{eq:S=expect}
S_n^j=\E_{\infty}(T^j_n)=\sum_{\ell\geq n} \E_{\ell+1}(T^j_\ell),
\end{align}
where $T^j_\ell$ is the first hitting time of $i$ by the process $Y^j$. 

In particular, Assumption~\eqref{eq:assumption1} means that on each time interval $[\sigma_j,s_{j+1})$ with $j\geq 0$, the process $X$
comes down from infinity. Note that we make no assumption on the unfavorable time intervals, except that the process is not
explosive.

\begin{rem}
  \label{rem:explosive}
  We could actually deal with explosive processes by defining our process on $\N\cup\{+\infty\}$, assuming that $+\infty$ is absorbing
  during unfavorable time intervals. This would not change our analysis, but for the construction of the process.
\end{rem}

If we think of the time lengths $u_j$ and $v_j$ as modelling the influence of a random environment on the previous birth and death process,
the next result shows that the conditions of Theorem~\ref{thm:bd-beau-thm} are almost surely true for quenched random environments
under very general conditions.

\begin{thm}
  \label{thm:expo-moment-quenched}
  Assume that the times $(u_j,v_j)$ are drawn as i.i.d.\ realizations of a random couple $(U,V)$, where $U$ and $V$ are positive and
  $\E(U)<\infty$. Then, for any $\lambda>0$, there exists a finite $F\subset\N$ and an infinite $J\subset\N$ such that, for almost
  all realization of the random variables $(u_j,v_j)_{j\geq 0}$,
  \begin{align}
    \label{eq:expo-moment-X-bis}
    A_\lambda:=\sup_{j\in J}\ \sup_{x\in\N}\ \E_{\sigma_j,x}\left(e^{\lambda (T^X_F-\sigma_j)}\right)<\infty
  \end{align}
  and for all $t_0>0$, there exists $b\geq 2$ such that the set
  \begin{align}
    \label{eq:expo-moment-2-bis}
    J_b:=\Big\{j\in J,\,\exists k\in J\text{ s.t.}\ t_0+2\leq \sigma_k-\sigma_j\leq t_0+b\Big\}
  \end{align}
  is infinite. If in addition Assumptions~\eqref{eq:assumption3} and~\eqref{eq:assumption4} are satisfied, then the conclusions of
  Theorem~\ref{thm:bd-beau-thm} hold true for almost all realization of the random variables $(u_j,v_j)_{j\geq 0}$.
\end{thm}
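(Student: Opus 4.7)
The plan is to apply Theorem~\ref{thm:bd-beau-thm} with $\mathcal{T}=\{\sigma_j:j\in J\}$ for a random index set $J$ that is almost surely infinite, to be constructed from the environment. Fix $\lambda>\|\kappa\|_\infty+\log(\gamma_{\{1\}}^{-1})$ and introduce an auxiliary rate $\lambda'>\lambda$ to be chosen later. The uniform coming down from infinity hypothesis $\sup_j S_n^j\to 0$ lets me first choose $N_0$ large so that $\epsilon:=\sup_j S_{N_0}^j$ is small and set $F=\{1,\ldots,N_0\}$. Markov's inequality applied to~\eqref{eq:S=expect} gives $\sup_j\P^{Y^j}_x(T^{Y^j}_F>2\epsilon)\leq 1/2$ uniformly in $x$, and iterating via the strong Markov property yields the uniform geometric tail $\sup_j\P^{Y^j}_x(T^{Y^j}_F>2k\epsilon)\leq 2^{-k}$, hence $C_{\lambda'}:=\sup_{j,x}\E^{Y^j}_x(e^{\lambda'T^{Y^j}_F})<\infty$ as soon as $\epsilon<\log 2/(4\lambda')$.

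The heart of the argument is a recursive bound on $\phi_j(x):=\E_{\sigma_j,x}(e^{\lambda(T^X_F-\sigma_j)})$. Coupling $X$ with $Y^j$ on $[\sigma_j,s_{j+1})$ so that $X_t\leq Y^j_{t-\sigma_j}$, the event $\{T^{Y^j}_F\leq v_j\}$ implies $T^X_F\leq\sigma_j+T^{Y^j}_F$, and in particular $\{T^X_F>s_{j+1}\}\subseteq\{T^{Y^j}_F>v_j\}$. Splitting $\phi_j$ along $\{T^X_F\leq s_{j+1}\}$ and its complement, bounding the complementary probability by $C_{\lambda'}e^{-\lambda'v_j}$, and applying the Markov property successively at $s_{j+1}$ and $\sigma_{j+1}$ yields
\[
\sup_x\phi_j(x)\leq C+C_{\lambda'}\,a_j\,\bigl(1+\sup_x\phi_{j+1}(x)\bigr),\qquad a_j:=e^{-(\lambda'-\lambda)v_j+\lambda u_{j+1}},
\]
with a constant $C$ independent of $j$. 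Iterating produces $\sup_x\phi_j(x)\leq K\,S_j$ where $S_j:=\sum_{n\geq 0}C_{\lambda'}^n\prod_{k=0}^{n-1}a_{j+k}$. Choosing $\lambda'$ large enough that $\log C_{\lambda'}+\lambda\E U<(\lambda'-\lambda)\E V$, which is possible thanks to $\E U<\infty$ and automatic if $\E V=\infty$, the strong law of large numbers applied to the i.i.d.\ sequence $\log(C_{\lambda'}a_k)$ forces exponential decay of its summands, so $S_j<\infty$ almost surely.

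By stationarity and ergodicity of $(S_j)_{j\geq 0}$, for $M$ large enough that $\P(S_0\leq M)>0$, Birkhoff's ergodic theorem ensures that $J:=\{j\geq 0:S_j\leq M\}$ has positive density almost surely, which establishes~\eqref{eq:expo-moment-X-bis} with $A_\lambda\leq KM$ and $J$ infinite. For~\eqref{eq:expo-moment-2-bis}, given $t_0>0$, I construct a positive-probability shift-invariant event forcing membership in $J_b$: pick $v_0,u_0$ in the supports of $V,U$ and $n\geq 1$ so that $n(v_0+u_0)$ lies in some target interval $[t_0+2,t_0+b]$, then require $(v_l,u_{l+1})$ to lie in a small neighborhood of $(v_0,u_0)$ for $l=0,\ldots,n-1$ and the subsequent tail to be typical enough to enforce both $0\in J$ and $n\in J$; Birkhoff's theorem applied to the shift then yields $J_b$ infinite almost surely.

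The main obstacle is establishing the recursion: the coupling only provides control during the favorable period, so the tail bound $e^{-\lambda'v_j}$ produced by the coupling must beat the factor $e^{\lambda u_{j+1}}$ accrued when the Markov property is invoked through the subsequent unfavorable interval, which is precisely why $\E U<\infty$ and a sufficiently large $\lambda'$ are crucial. A secondary difficulty is making $J_b$ infinite when $V+U$ has a small essential supremum, which is why one may need $n>1$ periods in the positive-probability construction. Once~\eqref{eq:expo-moment-X-bis} and~\eqref{eq:expo-moment-2-bis} are verified, all conclusions of Theorem~\ref{thm:bd-beau-thm} follow pathwise in the environment.
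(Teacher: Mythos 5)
Your proposal follows the same high-level strategy as the paper's (split into the deterministic Lemma~\ref{lem:1} and the probabilistic Lemma~\ref{lem:2}): couple $X$ with the favorable homogeneous process $Y^j$ on $[\sigma_j,s_{j+1})$, derive a recursion for the exponential moment of $T_F$ by applying the Markov property at the times $s_{j+1},\sigma_{j+1}$, and then invoke the strong law of large numbers and an ergodicity argument to find an infinite set $J$ of good indices. The implementation differs in two interesting ways, though. Where the paper controls $\sup_x\E_x(e^{\lambda T_F^j}\11_{T_F^j>v_j})$ by Cauchy--Schwarz plus the first-moment Markov inequality, producing the factor $\varepsilon/\sqrt{v_j}$ and a condition~\eqref{eq:assumtion5} on averages of $-\frac12\log v_{j+\ell-1}$, you use a Chernoff bound with an auxiliary rate $\lambda'>\lambda$, producing the factor $C_{\lambda'}e^{-(\lambda'-\lambda)v_j}$; since $-v$ stays bounded as $v\to 0$ while $-\log v$ does not, this sidesteps the concatenation trick the paper uses to reduce to $V\geq\varepsilon$. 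In place of the paper's renewal Markov chain argument in Lemma~\ref{lem:2}, you use Birkhoff's ergodic theorem on the shift; both give positive asymptotic density of $J$.

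Two points would need to be tightened. First, the recursion $\sup_x\phi_j\leq C+C_{\lambda'}a_j(1+\sup_x\phi_{j+1})$ is stated for the untruncated moment $\phi_j$, which is not known a priori to be finite; as written, the iteration could read $\infty\leq\infty$. The paper avoids this by working with $\alpha(s,t)=\sup_x\E_{s,x}(e^{\lambda(T_F-s)\wedge t})\leq e^{\lambda t}$, showing that the remainder of the iteration tends to $0$ for each fixed $t$ (because the tail product decays), and only then letting $t\to+\infty$ by monotone convergence. You should insert the same truncation. You also take for granted that $C_{\lambda'}$ can be kept under control as $\lambda'$ grows; this works because one can choose $F$ (hence $\epsilon$) so that $\lambda'\epsilon$ stays at a fixed small value, making $C_{\lambda'}$ bounded independently of $\lambda'$, but the point deserves a sentence. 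Second, the $J_b$ construction is only sketched and needs to be made precise: the event you want has the form $\Gamma=\{(v_\ell,u_{\ell+1})\in B\text{ for }\ell<n\}\cap\{S_n\leq M\}$; these two events \emph{are} independent since the first depends on $(v_0,u_1,\ldots,v_{n-1},u_n)$ and the second on $(v_n,u_{n+1},\ldots)$, and their intersection forces $S_0\leq M'$ for some possibly larger $M'$, which is harmless after enlarging $M$. Once these are filled in, the argument is a valid alternative to the paper's Lemmas~\ref{lem:1} and~\ref{lem:2}.
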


\begin{rem}
  \label{rem:cv-expo}
  Note that, since the random variables $(u_j,v_j)$ are i.i.d.\ and because of the renewal argument of the proof of Lemma~\ref{lem:2}
  below, one can check that the set $J_b$ has a positive asymptotic density, in the sense that, for almost all realization of the
  random variables $(u_j,v_j)_{j\geq 0}$,
  \begin{align*}
    \liminf_{T\rightarrow+\infty}\frac{1}{T}\text{Card}\{\sigma_j\leq T: j\in J_b\}>0.
  \end{align*}
  Therefore, under the assumptions of the last theorem, all the convergences in Theorem~\ref{thm:bd-beau-thm} are exponential. More
  precisely, $\exp(-\gamma\ N_{b,s,t})$ can be replaced everywhere in Theorem~\ref{thm:bd-beau-thm} by $C\exp(-\gamma'(t-s))$ for
  some constants $C,\gamma'>0$ \textit{a priori} dependent on the realization of $(u_j,v_j)_{j\geq 0}$.
\end{rem}

Theorem~\ref{thm:expo-moment-quenched} actually holds true under the following more general assumptions. We will divide the proof in
two steps, first proving this more general result (Lemma~\ref{lem:1}) and second, checking that its assumptions are implied by those
of Theorem~\ref{thm:expo-moment-quenched} (Lemma~\ref{lem:2}).

Given fixed positive numbers $u_0,u_1,\ldots$ and $v_0,v_1,\ldots$, we set for all $j\geq 0$ and $\lambda>0$
\begin{align}
\label{eq:assumption2}
C_{\lambda,j}=\sup_{n\in\N}\,  \frac{1}{n}\sum_{\ell=1}^n
\left(  \lambda u_{j+\ell} \, - \, \frac{\log v_{j+\ell-1}}{2} \right).
\end{align}
We will need the next two assumptions: there exists $\lambda>0$ such that
\begin{align}
  \label{eq:assumtion5}
  \exists J_\lambda\subset\N\text{ infinite such that }(C_{\lambda,j},j\in J_\lambda)\text{ is bounded}
\end{align}
and
\begin{align}
  \label{eq:assumtion6}
  \forall t_0>0,\quad\liminf_{j\in J_\lambda,\ j\rightarrow+\infty}\ \inf\left\{\sigma_{k}-\sigma_{j}:\ k\in J_\lambda,\ k>j,\
    \sigma_{k}-\sigma_{j}>t_0\right\}<\infty.
\end{align}

\begin{lem}
  \label{lem:1}
  Assume that there exists $\lambda>0$ such that~\eqref{eq:assumtion5} is satisfied. Then there exists a
  finite $F\subset\N$ such that
  \begin{align}
    \label{eq:expo-moment}
    \sup_{j\in J_\lambda}\ \sup_{x\in\N}\ \E_{\sigma_j,x}\left(e^{\lambda (T_F-\sigma_j)}\right)<\infty.
  \end{align}
  If in addition~\eqref{eq:assumtion6} is satisfied for the same $\lambda>0$, then Conditions~\eqref{eq:expo-moment-X}
  and~\eqref{eq:expo-moment-2} of Theorem~\ref{thm:bd-beau-thm} are true for this value of $\lambda$. In particular, if
  Assumptions~\eqref{eq:assumption3} and~\eqref{eq:assumption4} are satisfied and
  $\lambda>\|\kappa\|_\infty+\log(\gamma_{\{1\}}^{-1})$, then the conclusions of Theorem~\ref{thm:bd-beau-thm} hold true.
\end{lem}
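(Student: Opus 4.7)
The plan is to first establish the exponential moment bound~\eqref{eq:expo-moment}, which is the main technical step, and then to derive conditions~\eqref{eq:expo-moment-X} and~\eqref{eq:expo-moment-2} of Theorem~\ref{thm:bd-beau-thm}. The central observation is that during each favorable phase $[\sigma_{j+k},s_{j+k+1})$, the process $X$ is stochastically dominated by the time-homogeneous birth-and-death process $Y^{j+k}$ of~\eqref{eq:def-S}, which comes down from infinity uniformly in $k$ since $\bar S_n:=\sup_m S^m_n\to 0$ by~\eqref{eq:assumption1}. A classical iterated Markov argument, cutting $[0,\infty)$ into blocks of length $M$ and using that conditionally on $T^m_n>kM$ the state of $Y^m$ is still $\geq n+1$, so its residual expected hitting time is still bounded by $\bar S_n$, shows that for every $\mu>0$ and any $M$ with $e^{\mu M}\bar S_n<M$,
\[
C_{n,\mu}:=\sup_{m,\,x}\E_x\bigl(e^{\mu T^m_n}\bigr)\leq\frac{e^{\mu M}}{1-e^{\mu M}\bar S_n/M},
\]
and in particular $C_{n,\mu}\to 1$ as $n\to\infty$ (take for instance $M=\sqrt{\bar S_n}$).

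Fix $\lambda'>\lambda$ and $n$ to be chosen, let $F=\{1,\ldots,n\}$, and set $\phi_k:=\sup_{y\in\N}\E_{\sigma_{j+k},y}\bigl(e^{\lambda(T_F-\sigma_{j+k})}\bigr)$. Splitting whether $T_F\leq s_{j+k+1}$ or not, the coupling with $Y^{j+k}$ on the favorable phase yields
\[
\E_{\sigma_{j+k},y}\bigl(e^{\lambda(T_F-\sigma_{j+k})}\11_{T_F\leq s_{j+k+1}}\bigr)\leq C_{n,\lambda},\qquad \P_{\sigma_{j+k},y}(T_F>s_{j+k+1})\leq C_{n,\lambda'}e^{-\lambda'v_{j+k}},
\]
the second bound following from $\P(T^m_n>v)\leq e^{-\lambda'v}C_{n,\lambda'}$. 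Bounding the duration of the unfavorable interval $[s_{j+k+1},\sigma_{j+k+1})$ pessimistically by $e^{\lambda u_{j+k+1}}$ and then applying the Markov property at $\sigma_{j+k+1}$ leads to the linear recursion
\[
\phi_k\leq C_{n,\lambda}+b_k(1+\phi_{k+1}),\qquad b_k:=C_{n,\lambda'}\exp\!\bigl(-(\lambda'-\lambda)v_{j+k}+\lambda u_{j+k+1}\bigr).
\]
Iterating up to a finite horizon $L$ (where the boundary contribution $\sup_y\P_{\sigma_{j+L},y}(X_{\sigma_{j+L}}\in F)\leq 1$ is trivial) and letting $L\to\infty$ gives $\phi_0\leq\sum_{m\geq 0}(C_{n,\lambda}+b_m)\prod_{\ell=0}^{m-1}b_\ell$, provided the infinite product vanishes.

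The main obstacle is to exhibit uniform geometric decay of $\prod_{\ell=0}^{m-1}b_\ell$ over $j\in J_\lambda$. Rewriting~\eqref{eq:assumtion5} as $\lambda\sum_{\ell=1}^m u_{j+\ell}\leq mC_{\lambda,j}+\tfrac12\sum_{\ell=0}^{m-1}\log v_{j+\ell}$ and combining it with the pointwise inequality $\tfrac12\log v-(\lambda'-\lambda)v\leq-\tfrac12\log\bigl(2e(\lambda'-\lambda)\bigr)$ (the concave function being maximized at $v=1/(2(\lambda'-\lambda))$), one obtains
\[
\prod_{\ell=0}^{m-1}b_\ell\leq\left(\frac{C_{n,\lambda'}\,e^{C_{\lambda,j}}}{\sqrt{2e(\lambda'-\lambda)}}\right)^{\!m}.
\]
Setting $c:=\sup_{j\in J_\lambda}C_{\lambda,j}<\infty$ from~\eqref{eq:assumtion5}, choose $\lambda'-\lambda$ large enough that $\sqrt{2e(\lambda'-\lambda)}>2e^c$ and then $n$ large enough that $C_{n,\lambda'}\leq 2$: the base is then $\leq\tfrac12$ uniformly in $j\in J_\lambda$, and~\eqref{eq:expo-moment} follows with a bound on $\phi_0$ independent of $j\in J_\lambda$ and of the starting state.

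For the second claim, set $\mathcal{T}:=\{\sigma_j:j\in J_\lambda\}$, which is unbounded. The bound~\eqref{eq:expo-moment} is exactly condition~\eqref{eq:expo-moment-X} along $\mathcal{T}$; applying~\eqref{eq:assumtion6} with $t_0$ replaced by $t_0+2$ produces an unbounded set of $j\in J_\lambda$ each admitting some $k\in J_\lambda$ with $\sigma_k-\sigma_j\in[t_0+2,t_0+b]$ for a common $b\geq 2$ depending only on the value of the liminf, yielding condition~\eqref{eq:expo-moment-2}. Under~\eqref{eq:assumption3},~\eqref{eq:assumption4} and $\lambda>\|\kappa\|_\infty+\log(\gamma_{\{1\}}^{-1})$, Theorem~\ref{thm:bd-beau-thm} then applies verbatim.
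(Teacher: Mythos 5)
Your derivation of the exponential-moment bound~\eqref{eq:expo-moment} is correct in its core computation, but it takes a genuinely different route from the paper. Where the paper controls $\E_x(e^{\lambda T^j_F}\11_{T^j_F>v_j})$ by Cauchy--Schwarz followed by a first-moment Markov inequality, which produces the factor $(\E_x(e^{2\lambda T^j_F})\,\E_x T^j_F)^{1/2}/\sqrt{v_j}$ and hence a term $\varepsilon/\sqrt{v_j}$ that talks directly to the $-\tfrac12\log v_{j+\ell-1}$ in the definition~\eqref{eq:assumption2} of $C_{\lambda,j}$, you use an exponential Chebyshev bound at an auxiliary rate $\lambda'>\lambda$ and then convert the resulting factor $e^{-(\lambda'-\lambda)v}$ into the same $-\tfrac12\log v$ budget via the pointwise inequality $\tfrac12\log v-(\lambda'-\lambda)v\leq-\tfrac12\log\bigl(2e(\lambda'-\lambda)\bigr)$. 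This is a clean alternative: you get an extra knob $\lambda'$ to turn instead of tuning the hitting set $F$ to make $\sup_k(S^k_{\max F})^{1/2}$ small, though in the end both proofs pick $F$ (equivalently $n$) large using~\eqref{eq:assumption1}. Your preliminary estimate $C_{n,\mu}\to 1$, obtained by the block-decomposition argument, is also a reasonable substitute for the paper's appeal to known uniform exponential moments for processes coming down from infinity. The reduction from~\eqref{eq:assumtion6} to~\eqref{eq:expo-moment-2} in your last paragraph matches the paper's (terse) treatment.

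There is, however, a genuine gap in the iteration step. You define $\phi_k=\sup_y\E_{\sigma_{j+k},y}\bigl(e^{\lambda(T_F-\sigma_{j+k})}\bigr)$ and iterate the recursion $\phi_k\leq C_{n,\lambda}+b_k(1+\phi_{k+1})$ ``up to a finite horizon $L$'' before letting $L\to\infty$; but the boundary term in that iteration is $\prod_{\ell<L}b_\ell\cdot\phi_L$, not ``$\sup_y\P_{\sigma_{j+L},y}(X_{\sigma_{j+L}}\in F)\leq 1$'' as you write, and nothing guarantees $\phi_L<\infty$ a priori. Under the standing assumptions (and especially in the spirit of Remark~\ref{rem:explosive}), the supremum over $y\in\N$ of the untruncated exponential moment may be infinite, in which case the recursion $\phi_k\leq C_{n,\lambda}+b_k\phi_{k+1}$ carries no information. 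The paper avoids this by working throughout with the truncated quantities $\alpha(s,t)=\sup_{x}\E_{s,x}\bigl(e^{\lambda(T_F-s)\wedge t}\bigr)\leq e^{\lambda t}<\infty$, iterating at fixed $t$ so that the boundary term is $\leq\prod_\ell b_\ell\cdot e^{\lambda t}\to 0$, obtaining a bound uniform in $t$, and only then letting $t\to\infty$ by monotone convergence. Your proof should be amended accordingly: replace $\phi_k$ by $\phi_k^{(t)}:=\sup_y\E_{\sigma_{j+k},y}\bigl(e^{\lambda(T_F-\sigma_{j+k})\wedge t}\bigr)$, verify that the same recursion holds, and pass to the limit $t\to\infty$ at the end. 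With that fix, and modulo the harmless overestimate $(1+\phi_{k+1})$ in place of $\phi_{k+1}$ (a convex combination of $1$ and $\phi_{k+1}\geq 1$), the argument is sound.
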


The next lemma shows that the conditions of Lemma~\ref{lem:1} are satisfied almost surely under the conditions of
Theorem~\ref{thm:expo-moment-quenched}. In particular, Theorem~\ref{thm:expo-moment-quenched} is a straightforward consequence of
Lemmata~\ref{lem:1} and~\ref{lem:2}.

\begin{lem}
  \label{lem:2}
  Assume that the times $(u_j,v_j)$ are drawn as i.i.d.\ realizations of a random variable $(U,V)$, where $U$ and $V$ are positive
  and $\E(U)<\infty$. Then, for all $\lambda>0$,~\eqref{eq:assumtion5} and~\eqref{eq:assumtion6} are satisfied.
\end{lem}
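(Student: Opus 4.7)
The plan is to deduce both~\eqref{eq:assumtion5} and~\eqref{eq:assumtion6} from Birkhoff's ergodic theorem applied to the shift on the canonical environment space supporting the i.i.d.\ sequence of couples $(u_\ell, v_\ell)_{\ell \geq 0}$, which is ergodic.

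First, I set $X_\ell := \lambda u_\ell - (\log v_{\ell-1})/2$ for $\ell \geq 1$. Each $X_\ell$ is a fixed measurable functional of the shifted environment $\theta^{\ell-1}(u,v)$, so $(X_\ell)_{\ell \geq 1}$ is stationary and ergodic (even though it is generally not i.i.d., the couple structure of the environment creates one-step correlations). Using $\E(U) < \infty$ (together with the mild integrability $\E(\log V)^- < \infty$ needed to ensure $\E X_1 < \infty$), Birkhoff's theorem gives $\frac{1}{n}\sum_{\ell=1}^n X_\ell \to \E X_1 \in \R$ a.s., whence $C_{\lambda, 0} = \sup_{n \geq 1} \frac{1}{n}\sum_{\ell=1}^n X_\ell$ is a.s.\ finite. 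Picking $K > 0$ with $p := \P(C_{\lambda, 0} \leq K) > 0$, a second application of Birkhoff to the stationary indicator $(\11_{C_{\lambda, j} \leq K})_{j \geq 0}$ yields $\frac{1}{N}\sum_{j=0}^{N-1}\11_{C_{\lambda, j} \leq K} \to p > 0$ a.s., so the set $J_\lambda := \{j \geq 0 : C_{\lambda, j} \leq K\}$ is a.s.\ infinite with $(C_{\lambda, j})_{j \in J_\lambda}$ bounded by $K$, proving~\eqref{eq:assumtion5}.

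For~\eqref{eq:assumtion6}, I fix $t_0 > 0$ and define $\tau_j := \min\{k \in J_\lambda : k > j,\ \sigma_k - \sigma_j > t_0\}$ together with $Y_j := \sigma_{\tau_j} - \sigma_j$ for $j \in J_\lambda$. Since $J_\lambda$ is a.s.\ infinite and $\sigma_k \to +\infty$, both $\tau_j$ and $Y_j$ are a.s.\ finite on $\{j \in J_\lambda\}$, so $\P(\{0 \in J_\lambda\} \cap \{Y_0 \leq M\}) > 0$ for $M > t_0$ large enough. A third application of Birkhoff to the stationary indicator $\11_{\{j \in J_\lambda,\ Y_j \leq M\}}$ shows that $\{j \in J_\lambda : Y_j \leq M\}$ is a.s.\ infinite, hence $\liminf_{j \in J_\lambda,\ j \to \infty} Y_j \leq M < \infty$, which is exactly~\eqref{eq:assumtion6}.

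The genuinely delicate point is securing $\E X_1 < \infty$ in the first step, which requires some integrability of $\log V$ beyond mere positivity: without it, Birkhoff yields $\frac{1}{n}\sum_\ell X_\ell \to +\infty$ and $C_{\lambda, 0} = +\infty$ a.s., so no finite $K$ works and $J_\lambda$ would be empty. I read this condition as an implicit hypothesis of the lemma, natural in the intended applications. Once it is granted, the rest is a routine ergodic-theoretic exercise: measurability and stationarity of each indicator follow from the fact that $C_{\lambda, j}$, $\tau_j$ and $Y_j$ are all measurable with respect to $\sigma\bigl((u_\ell, v_\ell)_{\ell \geq j}\bigr)$.
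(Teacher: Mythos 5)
Your Birkhoff-ergodic strategy for showing that shift-invariant events of positive probability occur infinitely often is sound and in fact a bit cleaner than the paper's renewal/Markov-chain argument (the paper builds an absorbing chain $(I_k)$ with i.i.d.\ increments to reach the same i.o.\ conclusion). But there is a genuine gap at the start that you yourself flag and then wave away: you need $\E(\log V)^-<\infty$ to get $\E X_1<\infty$, and without it you correctly note that $\frac1n\sum_\ell X_\ell\to+\infty$ a.s., hence $C_{\lambda,0}=+\infty$ a.s.\ and no finite threshold $K$ can work. The lemma, however, assumes only that $U,V$ are positive with $\E(U)<\infty$; it imposes no integrability whatsoever on $\log V$. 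Reading the missing integrability as an ``implicit hypothesis'' is not correct — the statement as written must be proved without it.

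The paper resolves this with a reduction step that your proof is missing: pick $\varepsilon>0$ with $\P(V\ge\varepsilon)>0$ and, whenever $v_j<\varepsilon$, delete that (too-short) favorable interval and absorb it together with the adjacent unfavorable intervals into a single longer unfavorable interval. This is legitimate because the model imposes no constraints on unfavorable stretches beyond non-explosion, and it re-creates an i.i.d.\ sequence $(u'_j,v'_j)$ with $V'\ge\varepsilon$ a.s.\ (so $-\log V'$ is bounded above by $-\log\varepsilon$) and still $\E(U')<\infty$ because the number of merged blocks is geometric and the discarded $v$'s are each $<\varepsilon$. After this reduction, $X'_\ell=\lambda u'_\ell-\tfrac12\log v'_{\ell-1}$ has finite mean and your whole argument goes through. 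Prepended with this reduction, your ergodic-theorem route becomes a valid — and arguably more streamlined — proof of the lemma; without it, it proves only a strictly weaker statement that requires $\E(\log V)^-<\infty$.
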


\begin{rem}
  \label{rem:other-cases}
  The conditions of Lemma~\ref{lem:1} can be checked in different situations. For example, if for all $j\geq 0$, $v_j\geq\varepsilon$
  for some $\varepsilon>0$, then
  \begin{align*}
    C_{\lambda,j}\leq -\frac{\log\varepsilon}{2}+\lambda\sup_{n\geq 1}\frac{1}{n}\sum_{\ell=1}^n u_{j+\ell}.
  \end{align*}
  As a consequence~\eqref{eq:assumtion5} holds true for any sequence $(u_j,j\geq 0)$ (not necessarily drawn as an independent sequence)
  such that
  \begin{align*}
    \liminf_{j\rightarrow+\infty}\sup_{n\geq 1}\frac{1}{n}\sum_{\ell=1}^n u_{j+\ell}<\infty.
  \end{align*}
\end{rem}

\begin{proof}[Proof of Lemma~\ref{lem:1}]
  For all $s,t\geq 0$, we define
  \begin{align*}
    \alpha(s,t)=\sup_{x\in\N}\ \E_{s,x}\left(e^{\lambda(T_F-s)\wedge t}\right).
  \end{align*}
  For all $j\geq 0$, we have
  \begin{align}
    \label{eq:youpi1}
    \alpha(s_j,t)\leq e^{\lambda u_j}\alpha(\sigma_{j},t)
  \end{align}
  and, by Markov's property,
  \begin{align*}
    \alpha(\sigma_j,t) & \leq\sup_{x\in\N}\ \E_{\sigma_j,x}\left(e^{\lambda(T_F-\sigma_j)\wedge t}\11_{T_F\leq
        s_{j+1}}\right)+\sup_{x\in\N}\ \E_{\sigma_j,x}\left(e^{\lambda v_j}\11_{T_F>s_{j+1}}\right)\alpha(s_{j+1},t) \\
    & \leq\sup_{x\in\N}\ \E_{x}\left(e^{\lambda T^j_F}\right)+\sup_{x\in\N}\ \E_{x}\left(e^{\lambda T^j_F}\11_{T^j_F> v_j}\right)\alpha(s_{j+1},t),
  \end{align*}
  where $T^j_F$ is the first hitting time of the set $F$ by the time homogeneous process $Y^j$ defined above~\eqref{eq:S=expect}.
  Using Cauchy-Schwartz and Markov's inequalities,
  \begin{align*}
    \alpha(\sigma_j,t) & \leq\sup_{x\in\N}\ \E_{x}\left(e^{\lambda T^j_F}\right)+\sup_{x\in\N}\left(\E_{x}\left(e^{2\lambda
          T^j_F}\right)\,\P_x(T^j_F> v_j)\right)^{1/2}\alpha(s_{j+1},t) \\
    & \leq\sup_{x\in\N}\ \E_{x}\left(e^{\lambda T^j_F}\right)+\sup_{x\in\N}\left(\E_{x}\left(e^{2\lambda
          T^j_F}\right)\,\E_x(T^j_F)\right)^{1/2}\frac{\alpha(s_{j+1},t)}{\sqrt{v_j}}
  \end{align*}
  It is standard (cf.\ e.g.~\cite{champagnat-villemonais-15}) to deduce from~\eqref{eq:assumption1} that, given $\lambda>0$, there
  exists a finite $F_0=\{1,2,\ldots,\max F_0\}\subset\N$ such that
  \begin{align*}
    A_{F_0}:=\sup_{j\geq 0,\ x\in\N}\E_x\left(e^{2\lambda T^j_{F_0}}\right)<\infty.
  \end{align*}
 Since $\E_x\left(e^{2\lambda T^j_F}\right)$ is non-increasing in $F$, $A_F\leq A_{F_0}$ for
  all $F\supset F_0$. Given $F\supset F_0$ such that $F=\{1,2,\ldots,\max F\}$, we deduce from~\eqref{eq:S=expect} that
  \begin{align*}
    \alpha(\sigma_j,t) & \leq A_{F_0}+\sqrt{A_{F_0}}\,\sup_{k\geq 0}(S^k_{\max F})^{1/2}\frac{\alpha(s_{j+1},t)}{\sqrt{v_j}}.
  \end{align*}
  We set $\varepsilon=\exp(-C^*-1)$ with $C^*=\sup_{j\in J_\lambda}C_{\lambda,j}<\infty$. We then deduce from~\eqref{eq:assumption1}
  that there exists a finite $F\subset\N$ such that
  \begin{align}
    \label{eq:youpi2}
    \alpha(\sigma_j,t)\leq A_{F_0}+\varepsilon\,\frac{\alpha(s_{j+1},t)}{\sqrt{v_j}}.
  \end{align}
  Combining~\eqref{eq:youpi1} and~\eqref{eq:youpi2}, for all $j\geq 0$,
  \begin{align*}
    \alpha(\sigma_j,t)\leq A_{F_0}+\frac{\varepsilon}{\sqrt{v_j}}e^{\lambda u_{j+1}}\alpha(\sigma_{j+1},t).
  \end{align*}
  A straightforward induction then implies that, for all $n\geq 0$,
  \begin{align*}
    \alpha(\sigma_j,t)\leq A_{F_0}\left[1+\sum_{k=1}^{n} e^{\lambda (u_{j+1}+\ldots+u_{j+k})}\frac{\varepsilon^k}{\sqrt{v_{j}\ldots
        v_{j+k-1}}}\right]+e^{\lambda (u_{j+1}+\ldots+u_{j+n+1})}\frac{\varepsilon^{n+1}\ \alpha(\sigma_{j+n+1},t)}{\sqrt{v_{j}\ldots v_{j+n}}},
  \end{align*}
  and hence, since $\alpha(s,t)\leq e^{\lambda t}$ for all $s\geq 0$,
  \begin{align*}
    \alpha(\sigma_j,t)\leq & A_{F_0}\left[1+\sum_{k=1}^{+\infty} \varepsilon^k\ \exp\left(\sum_{\ell=1}^k\lambda u_{j+\ell}-\frac{\log v_{j+\ell-1}}{2}\right)\right]
    \\ & +\liminf_{n\rightarrow+\infty}\ \varepsilon^{n+1}\ \exp\left(\lambda t+\sum_{\ell=1}^n\lambda u_{j+\ell}-\frac{\log v_{j+\ell-1}}{2}\right).
  \end{align*}
  Assuming that $j$ belongs to the set $J_\lambda$ of Assumption~\eqref{eq:assumtion5}, by definition of $\varepsilon$ and $C^*$, we
  deduce that
  \begin{align*}
    \alpha(\sigma_j,t)\leq A_{F_0}\sum_{k=0}^{+\infty} e^{-k}+\liminf_{n\rightarrow+\infty}\ e^{\lambda
      t-n-1}=\frac{A_{F_0}}{1-1/e}.
  \end{align*}
  Letting $t\rightarrow+\infty$, we finally obtain
  \begin{equation*}
    \sup_{j\in J_\lambda}\ \sup_{x\in\N}\,\E_{\sigma_j,x}\left[e^{\lambda(T_F-\sigma_j)}\right]\leq\frac{A_{F_0}}{1-1/e}. \qedhere
  \end{equation*}
\end{proof}

\begin{proof}[Proof of Lemma~\ref{lem:2}]
  Given $\varepsilon>0$ such that $\P(V\geq\varepsilon)>0$, we can assume without loss of generality that $V\geq\varepsilon>0$ almost
  surely since, otherwise, we may modify the sequences $(u_j,j\geq 0)$ and $(v_j,j\geq 0)$ by removing all the favorable time
  intervals such that $v_j<\varepsilon$ and concatenating them with the surrounding unfavorable intervals. It is easy to check that
  this modifies the sequence $(u_j,v_j)_{j\geq 0}$ as an i.i.d.\ sample of a new random couple $(U',V')$ such that
  $V'\geq\varepsilon$ almost surely, and $\E U'=\E(U\mid V\geq\varepsilon)+\frac{\E(U+V\mid
    V<\varepsilon)}{\P(V\geq\varepsilon)}<\infty$.

  For all $i< j$, we introduce
  \begin{align*}
    S_{i,j}=\frac{1}{j-i}(u_{i+1}+\ldots+u_j).
  \end{align*}
  Since $\E U<\infty$, the strong law of large numbers implies that $S_{i,j}$ converges to $\E U$ when $j\rightarrow+\infty$ for all
  $i\geq 0$ and hence $\sup_{j> i} S_{i,j}<\infty$ almost surely. Therefore, there exists $A>0$ such that
  \begin{align*}
    \P\left(\sup_{j> i} S_{i,j}\leq A\right)>\frac{1}{2},\quad\forall i\geq 0.
  \end{align*}
  Then, for all $k_0\geq 1$,
  \begin{align*}
    \P\left(\sup_{j>i} S_{i,j}\leq A\ \text{and}\ \sup_{j> i+k_0} S_{i+k_0,j}\leq A\right)>0,\quad\forall i\geq 0.
  \end{align*}
  For any given $t_0>0$, we choose $k_0\in\N$ such that $k_0\varepsilon\geq t_0$. There exists a finite constant $C$ such that
  \begin{align*}
    p:=\P\left(\sup_{j> i} S_{i,j}\leq A,\ \sup_{j>i+k_0} S_{i+k_0,j}\leq A\ \text{and}\ v_i+\ldots+v_{i+k_0-1}\leq C\right)>0,\quad\forall i\geq 0.
  \end{align*}

  Now, for all $i\geq 0$ and $n\geq 1$, define
  \begin{align*}
    \Gamma_{i,n}:=
    \begin{cases}
      \left\{S_{i,i+n}> A\text{ or }v_i+\ldots+v_{i+n-1}>C\right\}  & \text{if }n\leq k_0, \\
      \left\{S_{i,i+n}> A\text{ or }S_{i+k_0,i+n}>A\text{ or }v_i+\ldots+v_{i+k_0-1}>C\right\}  & \text{if }n\geq k_0+1
    \end{cases}
  \end{align*}
  and consider the following random sequence
  \begin{align*}
    I_0=0\quad\text{and}\quad I_{k+1}=
    \begin{cases}
      I_k+\inf\{n\geq 1\text{ s.t.\ }\Gamma_{I_k,n}\text{ is satisfied}\} & \text{if }I_k<\infty, \\
      +\infty & \text{otherwise.}
    \end{cases}
  \end{align*}
  Since $\Gamma_{i,n}$ is measurable with respect to $\sigma(u_{i+1},\ldots,u_{i+n},v_i,\ldots,v_{i+n-1})$, the sequence $(I_k,k\geq
  0)$ is a Markov chain in $\N\cup\{+\infty\}$ absorbed at $+\infty$, with independent increments up to absorption. Moreover, at each
  step, the probability of absorption is equal to $p>0$. We deduce that
  \begin{multline*}
    \P\left(\forall i\geq 0,\ \sup_{j>i}S_{i,j}>A\text{ or }\sup_{j>i+k_0}S_{i+k_0,j}>A\text{ or }v_i+\ldots+v_{i+k_0-1}>C\right) \\
    \leq\P((I_k,k\geq 0)\text{ is never absorbed at }+\infty)=0.
  \end{multline*}
  As a consequence, for any fixed $i_0\geq 0$,
  \begin{align*}
    \P\left(\forall i\geq i_0,\ \sup_{j>i}S_{i,j}>A\text{ or }\sup_{j>i+k_0}S_{i+k_0,j}>A\text{ or }v_i+\ldots+v_{i+k_0-1}>C\right)=0,
  \end{align*}
  from which we deduce that
  \begin{align*}
    \P\left(\sup_{j>i}S_{i,j}\leq A,\ \sup_{j>i+k_0}S_{i+k_0,j}\leq A,\ v_i+\ldots+v_{i+k_0-1}\leq C\text{ for infinitely many }i\geq
      0\right)=1.
  \end{align*}
  Since,
  \begin{align*}
    \sigma_{i+k_0}-\sigma_i=v_i+u_{i+1}+v_{i+1}+\ldots+v_{i+k_0-1}+u_{i+k_0},
  \end{align*}
  since $\sup_{j>i}S_{i,j}\leq A$ implies that $u_{i+1}+\ldots+u_{i+k_0}\leq k_0 A$ and since $V\geq\varepsilon$ almost surely, we
  deduce that
  \begin{multline*}
    \P\left(C_{\lambda,i}\leq\lambda A-\frac{\log\varepsilon}{2},\ C_{\lambda,i+k_0}\leq\lambda A-\frac{\log\varepsilon}{2},\ \right. \\
      \text{and }k_0\varepsilon\leq\sigma_{i+k_0}-\sigma_i\leq k_0A+C\text{ for infinitely many }i\geq 0\Bigg)=1.
  \end{multline*}
  In other words, we proved that there exists $A>0$ such that
  \begin{align*}
    J_\lambda:=\{j\geq 0:C_{\lambda,j}\leq A\}
  \end{align*}
  is infinite, and that, for all $t_0>0$, setting $k_0=\lceil t_0/\varepsilon\rceil$,
  \begin{align*}
      \liminf_{j\in J_\lambda,\ j\rightarrow+\infty}\ \inf\left\{\sigma_{k}-\sigma_{j}:\ k\in J_\lambda,\ k>j,\
    \sigma_{k}-\sigma_{j}>t_0\right\}\leq k_0A+C.
  \end{align*}
  This concludes the proof of~\eqref{eq:assumtion5} and~\eqref{eq:assumtion6} and hence of Lemma~\ref{lem:2}.
\end{proof}

\section{Proof of Theorem~\ref{thm:main-result}}
\label{sec:pf-main-thm}

 Some parts of the proof are translations of the ideas of~\cite{champagnat-villemonais-15} in terms of penalized processes. 

\medskip

\noindent\textit{Step 1: control of the normalized distribution after a time 1}\\
Let us show that, for all $s\geq 0$, $T\geq s+1$ and $x_1,x_2\in E$, there exists a measure $\nu^{s,T}_{x_1,x_2}$ with mass greater than $d_{s+1}$ such that, for all non-negative measurable function $f:E\rightarrow\R_+$,
\begin{align}
\label{eq:control-of-distribution}
\delta_{x_i}K_{s,s+1}^T f\geq \nu^{s,T}_{x_1,x_2}(f),\text{ for }i=1,2.
\end{align}
Fix $x_1,x_2\in E$, $i\in\{1,2\}$, $t\geq 1$ and a measurable non-negative function $f: E\rightarrow
\R_+$. Using the Markov property, we have
\begin{align*}
\E_{s,x_i}(f(X_{s+1})Z_{s,T})&=\E_{s,x_i}(f(X_{s+1})Z_{s,s+1}\E_{s+1,X_{s+1}}(Z_{s+1,T}))\\
&\geq \nu_{s+1,x_1,x_2}\left(f(\cdot)\E_{s+1,\cdot}(Z_{s+1,T})\right)\E_{s,x_i}(Z_{s,s+1}),
\end{align*}
by definition of $\nu_{s+1,x_1,x_2}$.
Dividing both sides by $\E_{s,x_i}(Z_{s,T})$, we deduce that
\begin{align*}
\delta_{x_i}K_{s,s+1}^T\left(f\right)
&\geq \nu_{s+1,x_1,x_2}\left(f(\cdot)\E_{s+1,\cdot}(Z_{s+1,T})\right)
\frac{\E_{s,x_i}(Z_{s,s+1})}
{\E_{s,x_i}(Z_{s,T})}.
\end{align*}
But we have
\begin{align*}
\E_{s,x_i}(Z_{s,T})\leq \E_{s,x_i}(Z_{s,s+1})\sup_{y\in E} \E_{s+1,y}(Z_{s+1,T}),
\end{align*}
so that
\begin{align*}
\delta_{x_i}K_{s,s+1}^T f\geq\frac{\nu_{s+1,x_1,x_2}\left(f(\cdot)\E_{s+1,\cdot}(Z_{s+1,T})\right)}{\sup_{y\in E} \E_{s+1,y}(Z_{s+1,T})}.
\end{align*}
Now, by definition of $d_{s+1}$, the non-negative measure
\begin{align*}
\nu^{s,T}_{x_1,x_2}:f\mapsto \frac{\nu_{s,x_1,x_2}\left(f(\cdot)\E_{s+1,\cdot}(Z_{s+1,T})\right)}{\sup_{y\in E} \E_{s+1,y}(Z_{s+1,T})}
\end{align*}
has a total mass greater than $d_{s+1}$. Therefore~\eqref{eq:control-of-distribution} holds.

\bigskip
\noindent
\textit{Step 2: exponential contraction for Dirac initial distributions and proof of~\eqref{eq:main-1}}\\
We now prove that, for all $x,y\in E$ and $0\leq s\leq s+1\leq t \leq T$
\begin{align}
\label{eq:thm1-step2}
\left\|\delta_{x} K_{s,t}^T-\delta_{y} K_{s,t}^T\right\|_{TV}\leq 2 \prod_{k=0}^{\lfloor t-s \rfloor -1}\left(1-d_{t-k}\right).
\end{align}

We deduce from~\eqref{eq:control-of-distribution} that, for all $x_1,x_2\in E$,
 \begin{align*}
   \left\|\delta_{x_1} K_{s,s+1}^T-\delta_{x_2} K_{s,s+1}^T \right\|_{TV}
   &\leq
   \left\|\delta_{x_1} K_{s,s+1}^T -  \nu^{s,T}_{x_1,x_2}\right\|_{TV}
   +\left\|\delta_{x_2} K_{s,s+1}^T - \nu^{s,T}_{x_1,x_2}\right\|_{TV}\\
   &\leq
   2(1-d_s).
 \end{align*}
It is then standard (see e.g.~\cite{champagnat-villemonais-15}) to deduce that, for any probability measures $\mu_1$ and $\mu_2$ on $E$,

 \begin{equation*}
   \left\|\mu_1 K_{s,s+1}^T-\mu_2 K_{s,s+1}^T \right\|_{TV}\leq (1-d_s) \|\mu_1-\mu_2\|_{TV}.
 \end{equation*}
 Using the semi-group property of $(K_{s,t}^T)_{s,t}$,
 we deduce that, for any $x,y\in E$,
 \begin{align*}
   \left\|\delta_x K_{s,t}^T - \delta_y K_{s,t}^T\right\|_{TV}
   &=\left\|\delta_x K^T_{s,t-1} K_{t-1,t}^T - \delta_y K_{0,t-1}^T K_{t-1,t}^T\right\|_{TV}\\
           &\leq \left(1-d_{t}\right)\left\|\delta_x K_{s,t-1}^T - \delta_y K_{s,t-1}^T\right\|_{TV}\\
           &\leq\ \ldots\ \leq \prod_{k=0}^{\lfloor t-s\rfloor-1}\left(1-d_{t-k}\right)\, \left\|\delta_x K_{s,t-\lfloor t-s\rfloor}^T - \delta_y
             K_{s,t-\lfloor t-s\rfloor}^T\right\|_{TV}\\
           & \leq  2 \prod_{k=0}^{\lfloor t-s\rfloor-1}\left(1-d_{t-k}\right).
 \end{align*}
One deduces~\eqref{eq:main-1} with standard arguments as above.

\bigskip
\noindent
\textit{Step 3: exponential contraction for general initial distributions}\\
We prove now that 
for any pair of initial probability measures $\mu_1,\mu_2$ on $E$, for all $0\leq s\leq s+1\leq t \leq T\geq 0$,
\begin{align}
\label{eq:thm1-step3}
\left\|\frac{\E_{s,\mu_1}(\11_{X_t\in \cdot}Z_{s,T})}{\E_{s,\mu_1}(Z_{s,T})}-\frac{\E_{s,\mu_2}(\11_{X_t\in \cdot}Z_{s,T})}{\E_{s,\mu_2}(Z_{s,T})}\right\|_{TV}\leq 2 \prod_{k=0}^{\lfloor t-s \rfloor -1}\left(1-d_{t-k}\right).
\end{align}
Taking $t=T$ then entails~\eqref{eq:main-2} and ends the proof of Theorem~\ref{thm:main-result}.

Let $\mu_1$ be a probability measure on $E$ and $x\in E$. We have
\begin{align*}
  &\left\|\frac{\E_{s,\mu_1}(\11_{X_t\in \cdot}Z_{s,T})}{\E_{s,\mu_1}(Z_{s,T})}-\frac{\E_{s,x}(\11_{X_t\in \cdot}Z_{s,T})}{\E_{s,x}(Z_{s,T})}\right\|_{TV}\\
  &=\frac{1}{\E_{s,\mu_1}(Z_{s,T})}\left\|\E_{s,\mu_1}(\11_{X_t\in \cdot}Z_{s,T})-\E_{s,\mu_1}(Z_{s,T})\delta_x K_{s,t}^T \right\|_{TV}\\
  &\leq
\frac{1}{\E_{s,\mu_1}(Z_{s,T})} \int_{y\in E}
  \left\|\E_{s,y}(\11_{X_t\in \cdot}Z_{s,T})-\E_{s,y}(Z_{s,T})\delta_x K_{s,t}^T \right\|_{TV}d\mu_1(y)\\
  &\leq   \frac{1}{\E_{s,\mu_1}(Z_{s,T})} \int_{y\in E}
  \E_{s,y}(Z_{s,T})\left\|\delta_y K_{s,t}^T-\delta_x K_{s,t}^T\right\|_{TV}d\mu_1(y)\\
  &\leq  \frac{1}{\E_{s,\mu_1}(Z_{s,T})} \int_{y\in E}
  \E_{s,y}(Z_{s,T})\ 2 \prod_{k=0}^{\lfloor t-s \rfloor -1}\left(1-d_{t-k}\right) d\mu_1(y)\\
  &\leq 2 \prod_{k=0}^{\lfloor t-s \rfloor -1}\left(1-d_{t-k}\right).
\end{align*}
The same computation, replacing $\delta_x$ by any probability measure,
leads to~\eqref{eq:thm1-step3}.

\section{Proof of Proposition~\ref{prop:eta} and Corollary~\ref{cor:unicite-EDP}}
\label{sec:pf-prop}

\subsection{Proof of Proposition~\ref{prop:eta}}

Fix $s\geq 0$. Let us first prove~\eqref{eq:prop-borne-eta}. Note that, if $d'_v=0$ for all $v\geq s+1$, there is nothing to prove,
so let us assume the converse. Fix $t\geq s+1$ such that $d'_t>0$. Then the measure $\nu_t$ is positive and we define for all $x\in
E$ and $u\geq t$
\begin{align*}
\eta_{t,u}(x)=\frac{\E_{t,x}\left(Z_{t,u}\right)}{\E_{t,\nu_{t}}(Z_{t,u})}.
\end{align*}
For all $u\geq t$ and $x,y\in E$, we have
\begin{align*}
\frac{\E_{s,x}(Z_{s,u})}{\E_{s,y}(Z_{s,u})} 
&=\frac{\E_{s,x}\left(Z_{s,t}\E_{t,X_t}(Z_{t,u})\right)}{\E_{s,y}(Z_{s,t}\E_{t,X_t}(Z_{t,u}))} 
\\
&=\frac{\Phi_{s,t}(\delta_x)(\EE_{t,\cdot}(Z_{t,u}))}{\Phi_{s,t}(\delta_y)(\EE_{t,\cdot}(Z_{t,u}))}\,\frac{\E_{s,x}(Z_{s,t})}{\E_{s,y}(Z_{s,t})} \\
&=\frac{\Phi_{s,t}(\delta_x)(\eta_{t,u})}{\Phi_{s,t}(\delta_y)(\eta_{t,u})}\,\frac{\E_{s,x}(Z_{s,t})}{\E_{s,y}(Z_{s,t})}.
\end{align*}
Therefore
\begin{align}
  \left|\frac{\E_{s,x}(Z_{s,u})}{\E_{s,y}(Z_{s,u})}-\frac{\E_{s,x}(Z_{s,t})}{\E_{s,y}(Z_{s,t})}\right| & =\frac{\E_{s,x}(Z_{s,t})}{\E_{s,y}(Z_{s,t})}\ 
\frac{|\Phi_{s,t}(\delta_x)(\eta_{t,u})-\Phi_{s,t}(\delta_y)(\eta_{t,u})|}{\Phi_{s,t}(\delta_y)(\eta_{t,u})} \notag \\
&\leq \frac{\E_{s,x}(Z_{s,t})}{\E_{s,y}(Z_{s,t})}\ \frac{\|\eta_{t,u}\|_\infty}{\Phi_{s,t}(\delta_y)(\eta_{t,u})}\prod_{k=0}^{\lfloor
  t-s \rfloor-1}\left(1-d_{t-k}\right),
\label{eq:pf-prop-eta-1.25}
\end{align}
where we used the bound~\eqref{eq:main-2} of Theorem~\ref{thm:main-result} in the last inequality.

Let us first prove that $\eta_{t,u}$ is uniformly bounded and that we have $\Phi_{s,t}(\mu)(\eta_{t,u})\geq 1$ for all positive
measure $\mu$ on $E$. First, by definition of $d'_t$, we have
\begin{align}
 \label{eq:pf-prop-eta-2}
\eta_{t,u}(x)&=\frac{\E_{t,x}\left(Z_{t,u}\right)}{\E_{t,\nu_{t}}(Z_{t,u})}\leq 1/d'_t.
\end{align}
Second, by Markov's property,
\begin{align*}
\Phi_{s,t}(\mu)(\eta_{t,u})&=\frac{\E_{s,\mu}(Z_{s,t}\eta_{t,u}(X_t))}{\E_{s,\mu}(Z_{s,t})}\\
&= \frac{\E_{s,\mu}(Z_{s,u})}{\E_{s,\mu}(Z_{s,t})\E_{t,\nu_t}(Z_{t,u})},
\end{align*}
where, using the definition of $\nu_t$,
\begin{align*}
\E_{s,\mu}(Z_{s,u})&=\E_{s,\mu}\left[Z_{s,t-1}\E_{t-1,X_{t-1}}(Z_{t-1,t}\E_{t,X_t}(Z_{t,u}))\right]\\
&= \E_{s,\mu}\Big\{Z_{s,t-1}\Phi_{t-1,t}(\delta_{X_{t-1}})\left[\E_{t,\cdot}(Z_{t,u})\right]\E_{t-1,X_{t-1}}(Z_{t-1,t})\Big\}\\
&\geq \E_{s,\mu}\left[Z_{s,t-1}\E_{t,\nu_t}(Z_{t,u})\E_{t-1,X_{t-1}}(Z_{t-1,t})\right]\\
&=  \E_{s,\mu}(Z_{s,t})\E_{t,\nu_t}(Z_{t,u}).
\end{align*}
Hence,
\begin{align}
 \label{eq:pf-prop-eta-3}
\Phi_{s,t}(\mu)(\eta_{t,u})\geq 1.
\end{align}

Now, let $t_1$ be the smallest $v\geq s+1$ such that $d'_{v_1}>0$. Using a similar computation as in the proof
of~\eqref{eq:pf-prop-eta-3} above, we have
\begin{align}
  \frac{\E_{s,x}(Z_{s,u})}{\E_{s,y}(Z_{s,u})}
  & =\frac{\E_{s,x}[Z_{s,t_1-1}\Phi_{t_1-1,t_1}(\delta_{X_{t_1-1}})(\E_{t_1,\cdot}(Z_{t_1,u})) \E_{t_1-1,X_{t_1-1}}(Z_{t_1-1,t_1})]}{\E_{s,y}[Z_{s,t_1}\E_{t_1,X_{t_1}}(Z_{t_1,u})]}
  \notag \\
  & \geq \frac{\E_{t_1,\nu_{t_1}}(Z_{t_1,u}))}{\sup_{z\in E}\E_{t_1,z}(Z_{t_1,u})}\ \frac{\E_{s,x}(Z_{s,t_1})}{\E_{s,y}(Z_{s,t_1})}
  \notag \\
  & \geq d'_{t_1}\frac{\E_{s,x}(Z_{s,t_1})}{\sup_{z\in E}\E_{s,z}(Z_{s,t_1})}, \label{eq:preuve-prop}
\end{align}
where we used the definition of $d'_{t_1}$ in the last inequality. Note that the right-hand side of~\eqref{eq:preuve-prop} does not
depend on $u$ and $y$ and is positive by~\eqref{eq:hyp-first}.

Inserting the inequalities~\eqref{eq:pf-prop-eta-2},~\eqref{eq:pf-prop-eta-3} and~\eqref{eq:preuve-prop}
in~\eqref{eq:pf-prop-eta-1.25}, we obtain
\begin{align*}
  \left|\frac{\E_{s,x}(Z_{s,u})}{\E_{s,y}(Z_{s,u})}-\frac{\E_{s,x}(Z_{s,t})}{\E_{s,y}(Z_{s,t})}\right| & \leq \frac{\sup_{z\in
      E}\E_{s,z}(Z_{s,t_1})}{d'_{t_1}\E_{s,y}(Z_{s,t_1})}\ \frac{1}{d'_t}\prod_{k=0}^{\lfloor
    t-s \rfloor-1}\left(1-d_{t-k}\right) \\
  & =C_{s,y}\frac{1}{d'_t}\prod_{k=0}^{\lfloor t-s \rfloor-1}\left(1-d_{t-k}\right)
\end{align*}
where $C_{s,y}$ only depends on $s$ and $y$.

To complete the proof of~\eqref{eq:prop-borne-eta}, it remains to observe that, for any $u\geq t\geq s+1$ (not necessarily such that
$d'_t>0$) and for all $v\in[s+1,t]$ such that $d'_{v}>0$, we have $t_1\leq v$ and hence we can apply the last inequality to obtain
\begin{align*}
  \left|\frac{\E_{s,x}(Z_{s,u})}{\E_{s,y}(Z_{s,u})}-\frac{\E_{s,x}(Z_{s,t})}{\E_{s,y}(Z_{s,t})}\right| & \leq
  \left|\frac{\E_{s,x}(Z_{s,v})}{\E_{s,y}(Z_{s,v})}-\frac{\E_{s,x}(Z_{s,t})}{\E_{s,y}(Z_{s,t})}\right|+\left|\frac{\E_{s,x}(Z_{s,v})}{\E_{s,y}(Z_{s,v})}-\frac{\E_{s,x}(Z_{s,u})}{\E_{s,y}(Z_{s,u})}\right| \\
  & \leq 2C_{s,y}\frac{1}{d'_v}\prod_{k=0}^{\lfloor v-s \rfloor-1}\left(1-d_{v-k}\right).
\end{align*}

Now, we assume that~\eqref{eq:hyp-prop-eta} holds true. We fix $x_0\in E$. It follows from~\eqref{eq:prop-borne-eta} that
$x\mapsto\frac{\E_{s,x}(Z_{s,t})}{\E_{s,x_0}(Z_{s,t})}$ converges uniformly when $t\rightarrow+\infty$ to some function $\eta_s$,
which is positive because of~\eqref{eq:preuve-prop}.

Moreover, for all $s\leq t\leq u$, 
\begin{align}
\E_{s,x}\left(Z_{s,t}\frac{\E_{t,X_t}(Z_{t,u})}{\E_{t,x_0}(Z_{t,u})}\right) & =\frac{\E_{s,x}(Z_{s,u})}{\E_{t,x_0}(Z_{t,u})} \notag \\
& =\frac{\E_{s,x}(Z_{s,u})}{\E_{s,x_0}(Z_{s,u})}\ \E_{s,x_0}\left(Z_{s,t}\frac{\E_{t,X_t}(Z_{t,u})}{\E_{t,x_0}(Z_{t,u})}\right).
\label{eq:pf-prop-eta-4}
\end{align}
For all probability measure $\mu$ on $E$, integrating both sides of the equation with respect to $\mu$, letting $u\rightarrow\infty$
and using Lebesgue's theorem, we deduce that, for all $s\leq t\in I$, there exists a positive constant $c_{s,t}$ which does not
depend on $\mu$ such that
\begin{align*}
c_{s,t}=\frac{\E_{s,\mu}(Z_{s,t}\eta_t(X_t))}{\mu(\eta_s)}.
\end{align*}
In addition, for all $s\leq t\leq u\in I$,
\begin{align*}
  c_{s,t}c_{t,u} & =\frac{\E_{s,x}(Z_{s,t}\eta_t(X_t))}{\eta_s(x)}\,\frac{\E_{t,\mu}(Z_{t,u}\eta_u(X_u))}{\mu(\eta_t)}.
\end{align*}
Choosing the probability measure $\mu$ defined by $\mu(f)=\frac{\E_{s,x}(Z_{s,t}f(X_t))}{\E_{s,x}(Z_{s,t})}$ for all bounded
measurable $f$ and using Markov's property, we obtain
\begin{align*}
  c_{s,t}c_{t,u} & =\frac{\E_{s,x}(Z_{s,t})\,\E_{t,\mu}(Z_{t,u}\eta_u(X_u))}{\eta_s(x)}=\frac{\E_{s,x}(Z_{s,u}\eta_u(X_u))}{\eta_s(x)}=c_{s,u}.
\end{align*}
Because of the last equality, replacing for all $s\geq 0$ the function $\eta_s(x)$ by $\eta_s(x)/c_{0,s}$
entails~\eqref{eq:fonction-propre}.

\subsection{Proof of Corollary~\ref{cor:unicite-EDP}}

Let $(f_s)_{s\geq 0}$ be a solution of~\eqref{eq:valeur-propre-sg} satisfying~\eqref{eq:condition}. Fix $x_0\in E$ and for all $s\geq
0$, let $\nu_s=\E_{s,x_0}(Z_{0,s})$. Using~\eqref{eq:condition} and applying~\eqref{eq:main-2} with $\mu_1=\delta_x$ and
$\mu_2=\nu_s$, we have for all $s\geq 0$, $x\in E$ and for $t\rightarrow+\infty$,
\begin{align*}
  f_s(x) & =\E_{s,x}(Z_{s,t}f_t(X_t))\sim \E_{s,x}(Z_{s,t})\frac{\E_{0,x_0}(Z_{0,t}f_t(X_t))}{\E_{s,\nu_s}(Z_{s,t})}.
\end{align*}
Using~\eqref{eq:limite-eta} (integrated with respect to $\nu_s(dx)$), we deduce
\begin{align*}
  f_s(x) & \sim \frac{\eta_s(x)}{\nu_s(\eta_s)}\,\E_{s,\nu_s}(Z_{s,t})\,\frac{f_0(x_0)}{\E_{s,\nu_s}(Z_{s,t})} \\
  & \sim\frac{\eta_s(x)}{\eta_0(x_0)}\, f_0(x_0).
\end{align*}
Since both sides are independent of $t$, we obtain
\begin{align*}
  f_s=\eta_s\,\frac{f_0(x_0)}{\eta_0(x_0)}.
\end{align*}

\section{Proof of Theorem~\ref{thm:Q-proc}}
\label{sec:pf-Q-proc}

Let us define the probability measure $Q_{s,x}^t$ by
\begin{align*}
dQ_{s,x}^t&=\frac{Z_{s,t}}{\E_{s,x}(Z_{s,t})}d\P_{s,x},\quad\text{on }\mathcal{F}_{s,t}
\end{align*}
We have, for all $0\leq s\leq u\leq t$,
\begin{align*}
\frac{\E_{s,x}(Z_{s,t}\mid \cF_{s,u})}{\E_{s,x}(Z_{s,t})}=\frac{Z_{s,u}\E_{u,X_u}(Z_{u,t})}{\E_{s,x}[Z_{s,u}\E_{u,X_u}(Z_{u,t})]}
=\frac{Z_{s,u}\eta_{u,t}(X_u)}{\E_{s,x}[Z_{s,u}\eta_{u,t}(X_u)]}.
\end{align*}
By Proposition~\ref{prop:eta}, this converges almost surely when $t\rightarrow\infty$ to
\begin{align*}
M_{s,u}:=\frac{Z_{s,u}\eta_u(X_u)}{\E_{s,x}[Z_{s,u}\eta_u(X_u)]},
\end{align*}
where $\E_{s,x}(M_{s,u})=1$.

By the penalisation's theorem of Roynette, Vallois and Yor~\cite[Theorem~2.1]{Roynette_Vallois_Yor_2006}, these two conditions
(almost sure convergence and $\E_{s,x}(M_{s,u})=1$) imply that $(M_{s,t},t\geq s)$ is a martingale under $\P_{s,x}$ and that
$Q_{s,x}^{t}(\Lambda_{s,u})$ converges to $\E_{s,x}\left(M_{s,u}\11_{\Lambda_{s,u}}\right)$ for all $\Lambda_{s,u}\in{\cal F}_{s,u}$ when
$t\rightarrow\infty$. This means that $\Q_{s,x}$ is well defined and
\begin{align*}
\restriction{\frac{d\Q_{s,x}}{d\P_{s,x}}}{{\cal F}_{s,u}}=M_{s,u}.
\end{align*}

Let us now prove that the family $(\Q_{s,x})_{s\in I,x\in E}$ defines a time inhomogeneous Markov process, that is
for all $s\leq u\leq t$, all $x\in E$ and all positive measurable function $f$,
\begin{align*}
\E_{\Q_{s,x}}(f(X_t)\mid \cF_{s,u})=\E_{\Q_{u,X_u}}(f(X_t)).
\end{align*}
We easily check from the definition of the conditional expectation that
\begin{align*}
M_{s,u}\E_{\Q_{s,x}}(f(X_t)\mid \cF_{s,u})&=\E_{s,x}\left(M_{s,t}f(X_t)\mid \cF_{s,u}\right)\\
      &=\frac{\E_{s,x}[Z_{s,t}\eta_t(X_t)f(X_t)\mid\mathcal{F}_{s,u}]}{\E_{s,x}(Z_{s,t}\eta_t(X_t))}\\
      &=\frac{Z_{s,u}\E_{u,X_u}(Z_{u,t}\eta_t(X_t))}{\E_{s,x}(Z_{s,t}\eta_t(X_t))}\E_{u,X_u}\left(\frac{Z_{u,t}\eta_t(X_t)}{\E_{u,X_u}(Z_{u,t}\eta_t(X_t))}f(X_t)\right)\\
      &=\frac{Z_{s,u}\E_{u,X_u}(Z_{u,t}\eta_t(X_t))}{\E_{s,x}(Z_{s,t}\eta_t(X_t))}\E_{\Q_{u,X_u}}\left(f(X_t)\right),
\end{align*}
where we used the Markov property of $X$ under $\P_{s,x}$, the fact that $Z_{s,t}=Z_{s,u}Z_{u,t}$ and the definition of $\Q_{u,X_u}$.
Using the above equality with $f=1$, we conclude that
\begin{align*}
  \frac{Z_{s,u}\E_{u,X_u}(Z_{u,t}\eta_t(X_t))}{\E_{s,x}(Z_{s,t}\eta_t(X_t))}=M_{s,u}
\end{align*}
(we could also use~\eqref{eq:pf-prop-eta-4}). Hence, the Markov property holds for $(\Q_{s,x})_{s\in I,x\in E}$.

The inequality~\eqref{eq:thm-Q-proc-3} is a direct consequence of~\eqref{eq:main-1} in Theorem~\ref{thm:main-result}.

\bibliographystyle{abbrv}
\bibliography{biblio-bio,biblio-denis,biblio-math,biblio-math-nicolas}

\end{document}